\newtheorem{thm}{Theorem}[section]
\newtheorem{lem}[thm]{Lemma}
\newtheorem{prop}[thm]{Proposition}
\newtheorem{cor}[thm]{Corollary}
\theoremstyle{definition}
\newtheorem{definition}[thm]{Definition}
\theoremstyle{remark}
\newtheorem{rem}[thm]{Remark}
\newtheorem{ass}[thm]{Assumption}
\newtheorem{summary}[thm]{Summary}
\numberwithin{equation}{section}
\newlength\mylen
\newcommand\scalemath[2]{\scalebox{#1}{\mbox{\ensuremath{\displaystyle #2}}}}
\begin{document}

\title[Point Counting on Genus 3 Curves with Automorphism Group $\mathbb{Z} / 2 \mathbb{Z}$]{Point Counting on Non-Hyperelliptic Genus 3 Curves with Automorphism Group $\mathbb{Z} / 2 \mathbb{Z}$ using Monsky-Washnitzer Cohomology}

\author{Yih-Dar SHIEH}

\thanks{This work was supported by a project of the Agence Nationale de la Recherche, reference ANR-12-BS01-0010-01.}
\address{Aix Marseille Universit\'e, CNRS, Centrale Marseille, I2M, UMR 7373, 13453 Marseille, France}
\curraddr{\newline
Institut de Math\'ematiques de Marseille (I2M)\newline
163 avenue de Luminy, Case 907\newline
13288 Marseille, cedex 9\newline France}
\email{chiapas@gmail.com}

\begin{abstract}
 We describe an algorithm to compute the zeta function of any non-hyperelliptic genus 3 plane curve $C$ over a finite field with
automorphism group $G = \mathbb{Z} / 2 \mathbb{Z}$. This algorithm computes in the Monsky-Washnitzer cohomology of~the curve. Using the
relation between the Monsky-Washnitzer cohomology of $C$ and its quotient $E := C/G$, the computation splits into 2
parts: one in a subspace of the Monsky-Washnitzer cohomology and a second which reduces to the point counting on an elliptic curve $E$. The former corresponds to the dimension $2$ abelian surface $\mathrm{ker}(\mathrm{Jac}(C) \rightarrow E)$, on which we can compute with lower precision
and with matrices of smaller dimension. Hence we obtain a faster algorithm than working directly on the curve $C$.
\end{abstract}

\maketitle

\section{Introduction}

Henn gave the table of the possible non-trivial groups which appear as automorphism groups of a non-hyperelliptic
genus 3 curves, which can be found in Vermeulen's thesis \cite{Vermeulen}. The dimension of the set of moduli points
of non-hyperelliptic genus 3 curves whose automorphism group contain $\mathbb{Z}/2\mathbb{Z}$ is 4 inside the moduli of genus 3 curves $\mathcal{M}_3$ of dimension 6. We thus obtain an algorithm to compute the zeta
function of a large family of genus 3 curves.

In \cite{Kedlaya2}, Kedlaya used Monsky-Washnitzer cohomology to compute the zeta functions of hyperelliptic
curves over finite fields. This method could be applied to general varieties, and there are already generalizations to
superelliptic curves, $C_{a,b}$ curves and non-degenerate curves, see \cite{Castryck}, \cite{D-V} and \cite{Gaudry}.
This work also uses Monsky-Washnitzer cohomology but focuses on a smaller dimensional space associated to an abelian surface in the jacobian of $C$.

The remainder of this paper is organized as follows: Section \ref{MW} recalls the definition of Monsky-Washnitzer cohomology and results in this theory. In Section~\ref{Cohomology}, we compute a basis of the cohomology and describe a way to do the reduction of a differential form. Reduction means to write a differential form as a linear combination of the basis. In particular, we give an upper bound of the denominator after a differential form is reduced. This bound makes the algorithm
practical since it establishes a finite precision bound for the computation. Section \ref{Frobenius} describes a way to compute a lift of Frobenius and Section \ref{quotient} explains why the computation splits into $2$ eigenspaces. Finally, Section \ref{Algorithm} gives the algorithm and an analysis of its complexity.

\section{Monsky-Washnitzer Cohomology} \label{MW}

In this section, we recall the definition of Monsky-Washnitzer cohomology which is introduced by Monsky
and Washnitzer in \cite{M-W}\cite{Monsky 1}\cite{Monsky 2}.

Monsky-Washnitzer  cohomology is a $p$-adic cohomology theory defined for smooth affine varieties over
finite fields. Let $X$ be a smooth affine variety defined over a finite field $k:= \mathbb{F}_q$ of
characteristic $p$ with coordinate ring $\overline{A}$ which is a finitely generated $k$-algebra. In
\cite{Elkik}, Elkik showed that there exists a finitely generated smooth $\mathbb{Z}_q$-algebra $A$ such
that $A/pA \cong \overline{A}$, here $\mathbb{Z}_q$ is the valuation ring of $\mathbb{Q}_q$, the degree
$n:= \mathrm{log}_p q$ unramified extension of $\mathbb{Q}_p$.

In general, $A$ does not admit a lift of the Frobenius endomorphism $\overline{F}$ on
$\overline{A}$ , but its $p$-adic completion $A^{\infty}$ does. However, the dimension of the de Rham
cohomology of $A^ {\infty}$ may be too big. For example, if $A = \mathbb{Z}_p [x]$, then $
\sum_{n=0}^{\infty} p^n x^{p^n-1} dx$ is not an exact differential form since $\sum_{n=0}^{\infty}
x^{p^n}$ is not in $A^{\infty}$, but each term of this sum is exact. The problem is that this
differential form does not converge fast enough for its integral to converge as well.

Monsky and Washnitzer work with a subalgebra $A^{\dag}$ of $A^{\infty}$ consisting of series which converge fast
enough to solve the above problem. For
\[
A = \mathbb{Z}_q [x_1, x_2, \cdots, x_d]/(f_1, f_2, \cdots, f_r),
\]
the weak completion or dagger ring of $A$ is
\[
A^{\dag} := \mathbb{Z}_q \langle x_1, x_2, \cdots, x_d \rangle / (f_1, f_2, \cdots, f_r)
\]
where $\mathbb{Z}_q \langle x_1, x_2, \cdots, x_d \rangle$ is the subring of $A^{\infty}$ which consists
of overconvergent power series
\[
\left\{ \sum_{\alpha} a_{\alpha} x^{\alpha} \in \mathbb{Z}_q [[ x_1, x_2, \cdots, x_d ]] \ \Big| \  \liminf_{|\alpha| \rightarrow \infty} \frac{v_{p}(a_\alpha)} {|\alpha|} > 0 \right\}
\]
with $\alpha := (\alpha_1, \cdots, \alpha_d)$, $x^{\alpha} := x_1^{\alpha_1} \cdots x_d^{\alpha_d}$,
$|\alpha| = \sum_{i=1}^{d} \alpha_i$ and $v_p$ is the usual $p$-adic valuation on $\mathbb{Z}_q$.

\begin{definition}

The Monsky-Washnitizer cohomology of $X/\mathbb{F}_q$ is the de Rham cohomology of $A^{\dag} \otimes_{
\mathbb{Z}_q}  \mathbb{Q}_q$. More precisely, let $D^{0}(A^{\dag}) := A^{\dag}$, $D^{1}(A^{\dag})$ be
its universal module of differentials
\[
D^{1}(A^{\dag}) := \left( A^{\dag} \ dx_1 + \cdots + A^{\dag} \ dx_d \right) / \left( \sum_{j=1}^{r} A^{\dag} \left( \frac{\partial f_i}{\partial x_1} \, dx_1 + \cdots + \frac{\partial f_i}{\partial x_d} \, dx_d \right) \right)
\]
and $D^{i}(A^{\dag})$ be the $i$-th exterior product of $D^{1}(A^{\dag})$. Let $H^i(\overline{A} ,
\mathbb{Z}_q)$ be the $i$-th cohomology group of the complex
\[
0    \xrightarrow{} D^{0}(A^{\dag}) \xrightarrow{d_0}  D^{1}(A^{\dag}) \xrightarrow{d_1} D^{2}(A^{\dag}) \xrightarrow{d_2} D^{3}(A^{\dag}) \xrightarrow{d_3} \cdots
\]
where $d_i$ is the usual differentiation. Then the $i$-th Monsky-Washnitzer cohomology group of $X$ (or
of $\overline{A}$) is $  H^i(\overline{A} , \mathbb{Z}_q) \otimes_{
\mathbb{Z}_q} \mathbb{Q}_q $, which is denoted by $H^i_{\scriptscriptstyle{MW}} (X/\mathbb{F}_q)$ (or $H^i(\overline{A} , \mathbb{Q}_q)$).

\end{definition}

The Monsky-Washnitzer cohomology has the following properties, see \cite{van der Put}.

\begin{thm}
For a smooth finitely generated $\mathbb{F}_q$-algebra $\overline{A}$, we have
\begin{enumerate}[topsep=1.5mm] \setlength{\itemsep}{-0.0mm}
    \item The map $\overline{A} \mapsto H^i(\overline{A} , \mathbb{Q}_q) $ is well defined and
        functorial.
    \item There exists a $\mathbb{Z}_q$-algebra homomorphism $F_q$ : $A^{\dag} \rightarrow A^{\dag}$
        which lifts the Frobenius endomorphism of $\overline{A}$. Furthermore, any two lifts induce
        homotopic maps on the complex $D^{i}(A^{\dag})$. Hence they induce the same map $F_{q,*} :
        H^i(\overline{A} , \mathbb{Q}_q) \rightarrow H^i(\overline{A} , \mathbb{Q}_q)$ on the
        Monsky-Washnitzer cohomology.
\end{enumerate}
\end{thm}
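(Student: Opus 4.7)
\bigskip

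\noindent\textbf{Proof proposal.}
My plan is to deduce both (a) and (b) from a single comparison/lifting principle for dagger algebras, which is really the technical core of the Monsky--Washnitzer construction. Concretely, I would prove the following statement first: if $A^{\dag}$ and $B^{\dag}$ are weak completions of smooth $\mathbb{Z}_q$-lifts of $\overline{A}$ and $\overline{B}$ respectively, then any $\mathbb{F}_q$-algebra map $\overline{\phi} : \overline{A} \to \overline{B}$ admits a $\mathbb{Z}_q$-algebra lift $\phi : A^{\dag} \to B^{\dag}$, and any two such lifts $\phi_1, \phi_2$ are chain homotopic after extending to the de Rham complexes $D^{\bullet}$.

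For the existence of a lift, I would fix generators $x_1,\dots,x_d$ of the presentation of $A$ and build the images $\phi(x_i) \in B^{\dag}$ by successive approximation: choose an arbitrary mod-$p$ lift of $\overline{\phi}(\overline{x}_i)$, then inductively correct the lift modulo $p^{n+1}$ using the smoothness of the presentation, so that at each stage the obstruction to lifting the relations $f_j = 0$ lies in a module where it can be killed by adjusting the chosen values. The delicate point, and the main obstacle of the whole argument, is to control the $p$-adic denominators of the correction terms so that the resulting series for $\phi(x_i)$ satisfy the overconvergence bound $\liminf v_p(a_\alpha)/|\alpha|>0$ and therefore lie in $B^{\dag}$, not merely in the $p$-adic completion $B^{\infty}$. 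This is precisely where the dagger condition (as opposed to ordinary $p$-adic completion) is essential; I would invoke the overconvergent Artin--Rees / Hensel-type estimates that Monsky and Washnitzer establish for smooth presentations.

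Given two lifts $\phi_1, \phi_2$ of the same $\overline{\phi}$, I would construct an explicit chain homotopy $h : D^{i}(A^{\dag}) \to D^{i-1}(B^{\dag})$ satisfying $\phi_1 - \phi_2 = dh + hd$ via the standard linear interpolation: introduce a formal parameter $t$, form the family of ring maps $\phi_t = \phi_2 + t(\phi_1 - \phi_2)$ valued in $B^{\dag}[t]^{\dag}$, and define $h$ by integrating the $dt$-component of $\phi_t$ applied to a differential form from $0$ to $1$. Since $\phi_1$ and $\phi_2$ agree modulo $p$, the differences $\phi_1(x_i) - \phi_2(x_i)$ are divisible by $p$, and the formal antiderivative introduces denominators at most logarithmic in the degree; hence the overconvergence condition is preserved and $h$ takes values in the dagger complex. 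A routine verification gives the homotopy identity.

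With these two ingredients the theorem follows formally. For (a), well-definedness in $\overline{A}$ is obtained by applying the lifting principle to $\mathrm{id}_{\overline{A}}$ with respect to two different smooth $\mathbb{Z}_q$-lifts $A_1, A_2$: the resulting maps $A_1^{\dag} \to A_2^{\dag}$ and $A_2^{\dag} \to A_1^{\dag}$ have compositions homotopic to the identity on each complex, hence induce mutually inverse isomorphisms on $H^i(-,\mathbb{Q}_q)$; functoriality follows from the same mechanism applied to $\overline{\phi}:\overline{A}\to\overline{B}$, with well-definedness on cohomology supplied by homotopy uniqueness. For (b), the lift $F_q$ of Frobenius is the special case $\overline{B}=\overline{A}$, $\overline{\phi}=\overline{F}$ of the existence statement; any two such lifts are homotopic by the uniqueness statement, and since homotopic chain maps induce equal maps on cohomology, the induced endomorphism $F_{q,*}$ of $H^i(\overline{A},\mathbb{Q}_q)$ is canonical.
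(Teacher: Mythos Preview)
The paper does not supply its own proof of this theorem; it simply states the result and cites van der Put's exposition (and implicitly the original Monsky--Washnitzer papers). So there is no in-paper argument to compare against.

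Your outline is essentially the standard proof one finds in those references, and the overall architecture is correct: existence of lifts via smoothness together with overconvergent Hensel-type estimates, homotopy uniqueness via an interpolation, and then the formal deduction of well-definedness, functoriality, and the canonical Frobenius action. One technical point worth flagging: the linear interpolation $\phi_t = \phi_2 + t(\phi_1 - \phi_2)$ is \emph{not} a ring homomorphism for intermediate $t$, so it cannot literally be treated as a ``family of ring maps'' into $B^{\dag}[t]^{\dag}$. In the actual construction one either (i) invokes smoothness a second time to produce a genuine ring map $A^{\dag} \to B^{\dag}\langle t\rangle^{\dag}$ specializing to $\phi_2$ and $\phi_1$ at $t=0,1$, or (ii) writes the homotopy operator down directly from the differences $\phi_1(x_i) - \phi_2(x_i) \in pB^{\dag}$ via a Taylor-type formula on differential forms. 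Either route works, and the key convergence estimate you identify---that the $p$-divisibility of the differences dominates the denominators introduced by formal integration---is indeed the heart of the matter. Your proposal captures this correctly.
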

The following Lefschetz fixed point formula allows us to compute the zeta function of $X =
\mathrm{Spec}(\overline{A})$ using Monsky-Washnitzer cohomology.

\begin{thm}[Lefschetz fixed point formula] \label{thm3}
Let $X/\mathbb{F}_q$ be a smooth affine variety of dimension $d$. Then we have
\[ X(\mathbb{F}_q) =
\sum_{i=0}^{d} (-1)^i \mathrm{Tr} \left( q^d F_{q,*}^{-1} \Big| H_{\scriptscriptstyle{MW}}^i(X/\mathbb{F}_q) \right) .
\]
\end{thm}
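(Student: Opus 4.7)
The plan is to deduce the formula from Berthelot's Lefschetz trace formula in rigid cohomology with compact supports, combined with Poincaré duality. First, I would invoke the classical comparison for smooth affine varieties identifying $H^i_{\scriptscriptstyle{MW}}(X/\mathbb{F}_q)$ with the rigid cohomology $H^i_{\mathrm{rig}}(X/\mathbb{Q}_q)$, compatibly with the Frobenius action; this is standard (and covered in van der Put's article), and reduces the question to a statement in rigid cohomology.

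Next I would apply Berthelot's trace formula for compactly supported rigid cohomology,
\[
\#X(\mathbb{F}_q) \;=\; \sum_{i=0}^{2d} (-1)^i\,\mathrm{Tr}\!\left(F_{q,*}\,\Big|\,H^i_{c,\mathrm{rig}}(X/\mathbb{Q}_q)\right),
\]
and convert each trace on $H^{2d-i}_{c,\mathrm{rig}}$ into a trace on $H^i_{\mathrm{rig}}$ by means of Poincaré duality. The pairing $H^i_{\mathrm{rig}}(X)\times H^{2d-i}_{c,\mathrm{rig}}(X)\to\mathbb{Q}_q(-d)$ is perfect, and Frobenius acts on $\mathbb{Q}_q(-d)$ by $q^d$, so eigenvalues $\lambda$ of $F_{q,*}$ on $H^i_{\mathrm{rig}}$ and $\mu$ on $H^{2d-i}_{c,\mathrm{rig}}$ must satisfy $\lambda\mu = q^d$. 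This yields
\[
\mathrm{Tr}\!\left(F_{q,*}\,\Big|\,H^{2d-i}_{c,\mathrm{rig}}\right) \;=\; \mathrm{Tr}\!\left(q^d F_{q,*}^{-1}\,\Big|\,H^i_{\mathrm{rig}}\right).
\]
Substituting, reindexing the sum by $j=2d-i$ (using $(-1)^{2d-j}=(-1)^j$), and invoking the standard vanishing $H^i_{\mathrm{rig}}(X) = 0$ for $i>d$ on a smooth affine variety of dimension $d$, I would collapse the sum to $0\le j\le d$ and obtain the stated formula.

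The main obstacle is Poincaré duality for rigid cohomology of smooth but not necessarily proper varieties, together with the precise Tate-twist bookkeeping that produces the factor $q^d$ and the inverse Frobenius. This is the deep input, due to Berthelot; once it is granted, the remainder is a routine manipulation of characteristic polynomials. A secondary technical point is to verify that the lift $F_q$ defining $F_{q,*}$ and the Frobenius entering the rigid formulation induce the same operator on cohomology (up to the homotopy supplied by the preceding theorem), so that the trace is independent of the chosen lift and the right-hand side is well defined.
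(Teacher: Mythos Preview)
The paper does not supply a proof of this theorem at all: it is quoted as a background result in Section~\ref{MW}, with the implicit reference being Monsky's original fixed-point paper \cite{Monsky 2} (and the survey \cite{van der Put}). So there is no ``paper's own proof'' to compare against.

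Your outline is a correct modern route and genuinely different from Monsky's original argument. Monsky works entirely inside the dagger/formal-cohomology formalism and proves a Lefschetz trace formula directly, without passing through rigid cohomology or compactly supported cohomology. Your approach instead leverages the later machinery of Berthelot: the comparison $H^i_{\scriptscriptstyle{MW}}\cong H^i_{\mathrm{rig}}$ for smooth affines, the trace formula for $H^\bullet_{c,\mathrm{rig}}$, Poincar\'e duality with its Tate twist (giving the $q^d F_{q,*}^{-1}$), and affine vanishing $H^i_{\mathrm{rig}}(X)=0$ for $i>d$ to truncate the sum. Each of these ingredients is correct and standard. The trade-off is that your argument is conceptually cleaner but imports substantially heavier foundations (Poincar\'e duality for rigid cohomology of open varieties is a serious theorem), whereas Monsky's is self-contained within the theory the paper is already using. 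For the purposes of this paper either is acceptable, since the result is only being cited, not reproved.
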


\section{Cohomology of Non-Hyperelliptic Genus 3 Plane Curves with Automorphism Group $\mathbb{Z} / 2 \mathbb{Z}$} \label{Cohomology}

In this article, we consider non-hyperelliptic smooth projective plane curves $C$ of genus 3 whose automorphism group
contains $\mathbb{Z} / 2 \mathbb{Z}$ over a finite field $\mathbb{F}_q$ of characteristic $p \neq 2$. Such curves can be written (up to isomorphism) as
\[
C : \overline{F} := Y^4+ \overline{G}(X,Z)X^2+ \overline{H}(X,Z) = 0,
\]
with $\overline{G}(X,Z)$ and $\overline{H}(X,Z) \in \mathbb{F}_q [X,Z]$ which are homogeneous of degree
2 and 4 respectly. We assume that $C$ is smooth. Since the Monsky-Washnitzer cohomology is defined for
smooth affine varieties, we also consider the affine part of $C$
\[
C_{\mathrm{aff}} : \overline{f} := y^4+ \overline{g}(x)y^2+ \overline{h}(x) = 0 ,
\]
where $\overline{g}(x) = \overline{G}(x,1)$ and $\overline{h}(x) = \overline{H}(x,1)$ are the
dehomogenizations of $\overline{G}(X,Z)$ and $\overline{H}(X,Z)$ with respect to $Z$. In this section, we compute the
Monsky-Washnitzer cohomology $H_{\scriptscriptstyle{MW}}^i(C_{\mathrm{aff}} / \mathbb{F}_q)$ of
$C_{\mathrm{aff}}$ and relate the zeta function of $C/\mathbb{F}_q$ to the characteristic polynomial of
the Frobenius action $F_{q,*}$ on $H_{\scriptscriptstyle{MW}}^i(C_{\mathrm{aff}}/\mathbb{F}_q)$.

Choose arbitrary lifts $G(X,Z)$ and $H(X,Z) \in \mathbb{Z}_q [X,Z]$  of $\overline{G}(X,Z)$ and
$\overline{H}(X,Z)$ such that $ \mathrm{deg}_{X} G = \mathrm{deg}_{X} \overline{G}$ and $
\mathrm{deg}_{X} H = \mathrm{deg}_{X} \overline{H}$. Let $g(x) := G(x,1)$ and $h(x) := H(X,1)$ be the
dehomogenizations. Consider the following two curves
\[
\tilde{C} : F := Y^4+ G(X,Z)Y^2+ H(X,Z) = 0,
\]
and
\begin{equation}
\tilde{C}_{\mathrm{aff}} : f := y^4+ g(x)y^2+ h(x) = 0  \label{eq1}
\end{equation}
Since the reduction of $F$ modulo the maximal ideal $(p)$ of $\mathbb{Z}_q$ is $\overline{F}$ which
defines a smooth projective curve $C$, the generic fiber $\tilde{C}_{\xi} := \tilde{C}
\times_{\mathbb{Z}_q} \mathbb{Q}_q$ of $\tilde{C}$ is also smooth. Using the three facts that the
reduction of $f$ modulo the maximal ideal $(p)$ of $\mathbb{Z}_q$ equals to $\overline{f}$ which is not
zero in $\mathbb{F}_q [x,y]$, that $\overline{A} := \mathbb{F}_q [x,y]/(\overline{f}(x,y))$ is an integral
domain and that $p$ is a prime element in the unique factorization domain $\mathbb{Z}_q [x,y]$, one sees that
$A$ is an integral domain and hence it is flat over $\mathbb{Z}_q$. This shows that $A$ is a finitely
generated smooth $\mathbb{Z}_q$-algebra, so we can work with $A$ to apply the theory of
Monsky-Washnitzer cohomology. The above arguments also show that the generic fiber $\tilde{C}_{\xi}$ of
$\tilde{C}$ is a geometrically integral smooth projective curve over $\mathbb{Q}_q$.

Although we can compute the Monsky-Washnitzer cohomology of the affine curve $ C_{\mathrm{aff}} /
\mathbb{F}_q $ by explicit reduction algorithms and the control of denominators, we use the following
theorem instead, see \cite{Kedlaya}, and compute the algebraic de Rham cohomology
$H^{i}_{\scriptscriptstyle{dR}} (\tilde{C}_{\xi, \mathrm{aff}}/ \mathbb{Q}_q )$ of the curve
$\tilde{C}_{\xi, \mathrm{aff}}/\mathbb{Q}_q$, the affine part of the generical fiber $\tilde{C}_{\xi}$.
Note that we are concerned with curves, hence the divisors are always normal crossings.

\begin{thm} \label{thm1}

Let $Y/\mathbb{Z}_q$ be a smooth proper scheme, $Z$ be a relative normal crossings divisor and $X := Y
\backslash Z$ is affine. Then there is a canonical isomorphism
\[
H^{i}_{\scriptscriptstyle{dR}} ( X_{\xi} / \mathbb{Q}_q ) \rightarrow H^{i}_{\scriptscriptstyle{MW}} ( X_{p} / \mathbb{F}_q ),
\]
where $X_{\xi}$ is the generic fiber and $X_{p}$ is the special fiber of $X/\mathbb{Z}_q$, namely, the
fibers of $X$ at the closed point $(p)$ of $\mathrm{Spec}(\mathbb{Z}_q)$.

\end{thm}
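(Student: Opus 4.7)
The plan is to reduce the statement to a quasi-isomorphism of explicit complexes of differentials, exploiting the control on overconvergence that properness of $Y$ provides. Since $X$ is affine, write $X = \mathrm{Spec}(A)$ with $A$ a smooth $\mathbb{Z}_q$-algebra; then $H^{i}_{\scriptscriptstyle{dR}}(X_{\xi}/\mathbb{Q}_q)$ is computed by the complex $\Omega^{\bullet}_{A/\mathbb{Z}_q} \otimes \mathbb{Q}_q$, while $H^{i}_{\scriptscriptstyle{MW}}(X_{p}/\mathbb{F}_q)$ is computed by $\Omega^{\bullet}_{A^{\dag}/\mathbb{Z}_q} \otimes \mathbb{Q}_q$. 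The canonical inclusion $A \hookrightarrow A^{\dag}$ gives a morphism of complexes and hence a natural map on cohomology; the task is to show this map is an isomorphism.

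The first step I would carry out is to reinterpret both sides via the logarithmic de Rham complex $\Omega^{\bullet}_{Y/\mathbb{Z}_q}(\log Z)$ on the ambient smooth proper $Y$. By Deligne's theorem on the generic fiber, the algebraic hypercohomology of this complex computes $H^{i}_{\scriptscriptstyle{dR}}(X_{\xi}/\mathbb{Q}_q)$. The second step is to show that the \emph{same} log complex, viewed now on the rigid-analytic generic fiber of the $p$-adic completion $\widehat{Y}$, computes the Monsky-Washnitzer cohomology of $X_p$. Concretely, one must identify overconvergent sections on strict neighborhoods of the tube $\,]X_p[\,$ with the dagger algebra $A^{\dag}$, which is precisely the content of Berthelot's rigid cohomology formalism combined with van der Put's theorem identifying rigid and Monsky-Washnitzer cohomology for smooth affine schemes.

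Once both cohomologies are realized as hypercohomology of the same log complex, in its algebraic and its rigid-analytic incarnations over the proper base $Y$, a GAGA-type comparison for proper formal/rigid schemes yields a canonical isomorphism between them and closes the argument. The hard part is the second step: showing that overconvergent differentials on $X$ correspond to rigid-analytic log differentials on strict neighborhoods of $\,]X_p[\,$. This requires Berthelot's fibration theorem for strict neighborhoods of a normal-crossings boundary, together with a careful analysis of convergence radii, and it is exactly where the full force of the properness of $Y$ and the normal-crossings hypothesis on $Z$ is used to force the a priori wild boundary behavior to be log-controlled.
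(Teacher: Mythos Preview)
The paper does not prove this theorem at all: it is stated as a known result and attributed to Kedlaya via the citation \cite{Kedlaya}. So there is no ``paper's own proof'' to compare against; the theorem is imported wholesale.

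That said, your outline is a faithful sketch of how this comparison is established in the literature (Baldassarri--Chiarellotto, Berthelot, and the exposition in Kedlaya's cited paper). The chain
\[
H^{i}_{\scriptscriptstyle{dR}}(X_\xi) \;\cong\; \mathbb{H}^i\bigl(Y_\xi,\Omega^\bullet_{Y_\xi}(\log Z_\xi)\bigr) \;\cong\; \mathbb{H}^i\bigl(Y_\xi^{\mathrm{an}},\Omega^\bullet(\log Z)\bigr) \;\cong\; H^{i}_{\mathrm{rig}}(X_p) \;\cong\; H^{i}_{\scriptscriptstyle{MW}}(X_p)
\]
is exactly the route, with Deligne for the first isomorphism, rigid GAGA for the second, Berthelot's description of rigid cohomology via strict neighborhoods and the log complex for the third, and the Berthelot/van der Put comparison for the last. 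Your identification of the ``hard part'' (matching overconvergent sections with rigid-analytic log differentials near the boundary, which needs the normal-crossings hypothesis and Berthelot's fibration theorem) is accurate. What you have written is a high-level roadmap rather than a proof, but as a roadmap it is correct and matches the standard argument that the paper is invoking by citation.
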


\subsection{Geometry} \label{Geometry}
Before we compute the algebraic de Rham cohomology of the affine curve $\tilde{C}_{\xi, \mathrm{aff}}$, we need to study its
geometry. The coordinate ring of $ \tilde{C}_{\xi, \mathrm{aff}} $ is $A := \mathbb{Q}_q
[x,y]/(f(x,y))$, here $ f(x,y) = y^4+ g(x)y^2+ h(x) \in \mathbb{Z}_q[x,y]$ with $\mathrm{deg}(g) \leq 2$
and $\mathrm{deg}(h) \leq 4$. We write $g(x)$ and $h(x)$ as following
\begin{equation}
  \begin{split}
     g(x) &= a_2 x^2 + a_1 x + a_0, \ a_i \in \mathbb{Z}_q \\
     h(x) &= b_4 x^4 + b_3 x^3 + b_2 x^2 + b_1 x + b_0, \ b_i \in \mathbb{Z}_q
  \end{split} \label{eq2}
\end{equation}
\\
There are four cases to consider:
\begin{enumerate}[label=\textbf{Case~\arabic*.},
ref={Case~\arabic*},labelwidth=\dimexpr-\mylen-\labelsep\relax,leftmargin=0pt,align=right]
\item $ b_4 = 0 $ and $ a_2 = 0 $ \label{case5} \\
There is only one point at infinity which is $ P_{\infty} := (1:0:0) $. Using the fact that $
\tilde{C}_{\xi} $ is smooth at $P_{\infty} $, one shows that $ b_3 \neq 0 $. The dimension of the
first algebraic de Rham cohomology of $ \tilde{C}_{\xi, \mathrm{aff}} / \mathbb{Q}_q $ is $ 2g + N_{
\infty } - 1 = 2 \cdot 3 + 1-1 = 6 $, here $ g $ is the genus of $ \tilde{C}_{\xi} $, which equals
to the genus of $C$, and $ N_{\infty} $ is the number of points at infinity. We have
$\upsilon_{P_{\infty}}(x) = -4$ and $\upsilon_{P_{\infty}}(y) = -3$. The local parameter at
$P_{\infty}$ is $t := b_3 x^2/y^3$. The expansions of $x$ and $y$ as Laurent series of the local
parameter $t$ are $x=-b_3/t^4+\cdots$ and $y = b_3/t^3+\cdots$.

\item $ b_4 = 0 $ and $a_2 \neq 0$ \label{case6} \\
There are 3 points at infinity: $ P_{\infty} :=(1:0:0)$ and $ P_{\infty, \pm} := (1 :\pm \alpha:0) $
with $\alpha^2 = -a_2 $. Using the fact that $ \tilde{C}_{\xi} $ is smooth at $P_{\infty} $, one
shows that $ b_3 \neq 0 $. (The condition $ a_2 \neq 0 $ implies the smoothness at $ P_{\infty, +} $
and $ P_{\infty, -}$.) We have $\mathrm{dim}_{\mathbb{Q}_q} H^{1}_{\scriptscriptstyle{dR}} ( \tilde{C}_{\xi, \mathrm{aff}} /
\mathbb{Q}_q ) = 2 \cdot 3 + 3 - 1 = 8 $, $\upsilon_{P_{\infty, \pm}}(x) = \upsilon_{P_{\infty,
\pm}}(y) = -1$, $\upsilon_{P_{\infty}}(x) = -2$ and $\upsilon_{P_{\infty}}(y) = -1$. The local
parameters at $P_{\infty}$ and $P_{\infty, \pm}$ are $t := 1/y$ and $t_{\pm} := 1/x$. The expansions
of $x$ and $y$ at $P_{\infty}$ and $P_{\infty, \pm}$ as Laurent series of the local parameters are $x = \beta/t^2 + \gamma + \delta t^2 + \cdots$ with $\beta = -a_2/b_3$, $y
= 1/t$, and $x = 1/t_{\pm}$,
$y= \pm \alpha/t + \cdots$.

\item $ b_4 \neq 0 $ and $ a_2^2 - 4 b_4 = 0$ \label{case3} \\
There are 2 points at infinity: $ P_{\infty, \pm} := (1: \pm \alpha:0) $ with $\alpha = (-a_2/2
 )^{1/2}$. Using the fact that $ \tilde{C}_{\xi} $ is smooth at $P_{\infty, +} $, one shows that $
 a_1 a_2 - 2 b_3 \neq 0$.  We have $\mathrm{dim}_{\mathbb{Q}_q} H^{1}_{\scriptscriptstyle{dR}} ( \tilde{C}_{\xi, \mathrm{aff}} /
 \mathbb{Q}_q )   = 2 \cdot 3 + 2 - 1 = 7 $ and $\upsilon_{P_{\infty, \pm}}(x) = \upsilon_{P_{\infty,
 \pm}}(y) = -2$. The local parameters are $ t_{ \pm } : = y/x \mp \alpha$. The expansions of $x$ and
 $y$ as Laurent series of the local parameters are $x = \beta/t_{\pm}^2+\gamma/t_{\pm} + \cdots$ and
 $y = \pm \alpha \beta / t_{\pm}^2 + (\beta \pm \alpha \gamma)/t_{\pm} + \cdots$ with $\beta =
 -(a_1a_2-2b_3)/4a_2$ and $\gamma = \alpha(a_1 a_2 + 2b_3)/2 a_2^2$.

\item $ b_4 \neq 0 $ and $ a_2^2 - 4 b_4 \neq 0 $ \label{case7} \\
 There are 4 points at infinity which are $ P_{ \infty,i, \pm }  := (1: \pm \alpha_i:0 )$ with $ \pm
 \alpha_1$ and $ \pm \alpha_2$ are the four roots of $y^4+a_2y^2+b_4 = 0$. We have $
 \mathrm{dim}_{\mathbb{Q}_q} H^{1}_{\scriptscriptstyle{dR}} ( \tilde{C}_{\xi, \mathrm{aff}} / \mathbb{Q}_q ) = 2 \cdot 3
 + 4 - 1 = 9 $ and $\upsilon_{P_{\infty, i, \pm}}(x) = \upsilon_{P_{\infty, i, \pm}}(y) = -1$. The
 local parameters are $t := 1/x$. The expansions of $x$ and $y$ as Laurent series of $t$ are $x = 1/t$
 and $y = \pm \alpha_i/t+\cdots$.
\end{enumerate}

In order to analyze the control of denominators later, we need to impose further assumptions on the choice of the lift
$F(X,Y,Z)$.

\begin{ass}
The coefficients $a_i$ and $b_j$ of $g(x)$ and $h(x)$ in {\rm(\ref{eq2})} are either $0$ or units in
$\mathbb{Z}_q$. Furthermore, $a_2^2 - 4 b_4$ is either 0 or a unit in $\mathbb{Z}_q$.
\end{ass}

\begin{rem} \label{rem1}
A lift which satisfies the above assumptions could be constructed by using Teichm\"{u}ller lift.
The choice of such a lift is to preserve the geometric structure.
Under these assumptions, we introduce some facts which will be used later. The expansions of $x$ and $y$ as Laurent series of the local parameters have integral
coefficients.\footnote{Use Hensel's lemma. For \ref{case5}, one needs a 2-variable version of Hensel's
lemma.} This means that $x$ and $y$ are in $\mathcal{O}((t))$, here $\mathcal{O}$ is the integral
closure of $\mathbb{Z}_q$ in a finite extension $\mathbb{Q}_q(\alpha)$  {\rm (}$\mathbb{Q}_q( \alpha_1,
\alpha_2)$ in {\rm \ref{case7}}{\rm )} of $\mathbb{Q}_q$ and $\alpha$ is the $Y$-coordinate of the
points at infinity discussed above. Furthermore, the coefficients of the lowest
terms\,\footnote{\hspace{0.9pt}In particular, $\alpha, \beta \in \mathcal{O}^{\ast}$ if we use the
notations in the above classification.} in these Laurent series are units in $\mathcal{O}$. {\rm (}In {
\rm \ref{case3}}, one shows that $a_1a_2-2b_3$ is a unit in $\mathbb{Z}_q$ by using $C$ is smooth.{\rm
)} $\tilde{C}_{\xi}$ and $\overline{C}$ have the same geometry. In {\rm \ref{case7}}, $\sqrt{a_2^2-4
b_4}$ is a unit in $\mathcal{O}$.
\end{rem}

\subsection{The Reduction Algorithm and Algebraic de Rham Cohomology} \label{Reduction}

In this subsection, we present the reduction algorithm and use it to compute $ H^{1}_{\scriptscriptstyle{dR}} (
\tilde{C}_{\xi, \mathrm{aff}} / \mathbb{Q}_q ) $. First of all, since $ (i+1) x^j y^i dy = d (x^j y^{i+1}) - j x^{j-1} y^{i+1} dx$ for all $ i \geq 0$, the universal module of differential $
\mathrm{\Omega}^{1} $ of $\tilde{C}_{\xi, \mathrm{aff}} / \mathbb{Q}_q$ is generated by $ \{ x^j y^i dx
\ | \ i \geq 0, j \geq 0 \}  $. From the defining equation (\ref{eq1}), $\mathrm{\Omega}^1$ is generated
by $ \{ x^j y^i dx \ | \ 1 \leq i \leq 3, j \geq 0 \}  $. Since
\[
0 = df = d \left( y^4+g(x)y^2+h(x) \right) = \left( g'(x)y^2+h'(x) \right) dx + \left( 4y^3+2g(x)y \right) dy,
\]
we have
\[
x^k \left( g'(x)y^2+h'(x)  \right) y^l dx + x^k \left( 4y^3+2g(x)y \right) y^l dy = 0.
\]
Combine with the following equation
\begin{align*}
&\mathrel{\phantom{=}}  d \left( x^k \left( \frac{4}{l+4} y^{l+4}+\frac{2}{l+2} g(x) y^{l+2} \right) \right)  -  x^k \left( 4y^3+2g(x)y \right) y^l   dy \\
&=  \left( \frac{4}{l+4} k x^{k-1} y^{l+4} + \frac{2}{l+2} \left( k x^{k-1} g(x) + x^k g'(x) \right) y^{l+2} \right) dx,
\end{align*}
one gets
\begin{align*}
&\mathrel{\phantom{=}} x^k \left( \frac{l}{l+2} g'(x) y^2 + h'(x) \right) y^l dx - kx^{k-1} \left( \frac{4}{l+4} y^4 + \frac{2}{l+2} g(x) y^2 \right) y^l dx \\
&= d(S_{l,k}), \ \mathrm{where} \  S_{l,k} := -  x^k \left( \frac{4}{l+4} y^{l+4}+\frac{2}{l+2} g(x) y^{l+2} \right)
\end{align*}
Using $y^4 = -( g(x)y^2+ h(x) )$ and the explicit description of $g(x)$ and $h(x)$ in (\ref{eq2}), the above
equation becomes
\begin{equation}
\left( \sum_{j=-1}^{3} \Gamma_{k,l,0,j} \,  x^{k+j} y^l + \sum_{j=-1}^{1} \Gamma_{k,l,2,j} \,
x^{k+j} y^{l+2} \right) dx = d(S_{l,k}), \label{eq3}
\end{equation}
here the coefficients $ \Gamma_{k,l,0,j} $ and $ \Gamma_{k,l,2,j} $ are defined as following
\begin{align*}
\Gamma_{k,l,0,j} := (j+1+\frac{4k}{l+4}) \, b_{j+1}, \   \ \Gamma_{k,l,2,j} := \frac{l}{l+2} ( j+1+
\frac{2k}{l+4} ) a_{j+1}.
\end{align*}
In order to make things more clear, we use the following notation:
\begin{definition} \label{def1}
A family of matrices $M_k$ of size $ m \times n $ with entries $(M_k)_{i,j} = m_{k,i,j} \in \mathbb{Q}_q$ is called
\emph{a family of reduction matrices} if
\[
\left( \sum_{i=1}^{m} \sum_{j=1}^{n}  m_{k,i,j} \,  x^{k+j-3} y^{i-1} \right) dx \equiv 0 \ \mathrm{in \ \Omega^1, \ for \  all \ } k.
\]
For reduction matrices $M_k$, we define $M_k \, dx$ to be
\[
\left( \sum_{i=1}^{m} \sum_{j=1}^{n}  m_{k,i,j} \,  x^{k+j-3} y^{i-1} \right) dx.
\]
\end{definition}

For example, from (\ref{eq3}), we have reduction matrices of size $ (l+3) \times 6 $  which has non-zero entries only
at the $(l+1)$-th and $(l+3)$-th rows
\begin{equation*} M_{l,k}^{0} := \scalemath{0.85} {
\left(
  \begin{array}{cccccc}
   0 & 0 & 0 & 0 & 0 & 0 \\
   \cdot & \cdot & \cdot & \cdot & \cdot & \cdot  \\
   \cdot & \cdot & \cdot & \cdot & \cdot & \cdot \\
   0 & 0 & 0 & 0 & 0 & 0 \\
   0 & \Gamma_{l,k,0,-1} & \Gamma_{l,k,0,0} & \Gamma_{l,k,0,1} & \Gamma_{l,k,0,2} & \Gamma_{l,k,0,3} \\
   0 & 0 & 0 & 0 & 0 & 0 \\
   0 & \Gamma_{l,k,2,-1} & \Gamma_{l,k,2,0} & \Gamma_{l,k,2,1} & 0 & 0 \\
  \end{array}
\right) } \label{eq4}
\end{equation*}
and $M_{l,k}^0dx = d(S_{l,k})$. The superscript 0 that appears in $M_{l,k}^0$ means that it is obtained
from (\ref{eq3}), without further reduction.

We have to consider $l = 1,2$ and $3$, which give the reductions of $ x^{k+3} y^2 dx$ or $ x^{k+2} y^2 dx$:
\begin{enumerate}[label={  \textbf{\textit{l}} $\mathbf{\boldsymbol{=}\hspace{-3pt}~\arabic*.}$ }, ref={  $l \ \mbox{=}\hspace{-1.2pt}~\arabic*$ }, labelwidth=\dimexpr-\mylen-\labelsep\relax, labelsep=5mm, leftmargin=0mm]
\item  \  \label{case2} \\
As mentioned above, we have reduction matrices of size $4 \times 6$
\[
M_{1,k} = \frac{1}{15} \cdot \scalemath{0.85} { \left(
            \begin{array}{cccccccc}
             0 & 0 & 0 & 0 & 0 & 0  \\
             0 & 12k b_0 & (12k+15) b_1 & (12k+30) b_2 & (12k+45)  b_3 & (12k+60) b_4  \\
             0 & 0 & 0 & 0 & 0 & 0   \\
             0 & 2k  a_0 & (2k+5)  a_1 & (2k+10)  a_2 & 0 & 0   \\
            \end{array}
          \right) }
\]
From \ref{Geometry}, we know that one of $b_4$ and $b_3$ is non-zero. Hence $x^{k+3} y dx$ or
$x^{k+2} y dx$ can be reduced to a linear combination of $ \{ x^j y^i dx \} $ with $i=1$ or $3$ and $j
\leq k+2$ (if $b_4 \neq 0$) or $j \leq k+1$ (if $b_4=0$), which have smaller degree in $x$.

\item \  \label{case1} \\
From (\ref{eq4}), we have reduction matrices of size $5 \times 6$, so $y^4$ is involved. Using the defining
equation (\ref{eq1}) to reduce the degree in $y$, one gets a reduction matrix $M_{2,k}$ whose transpose is
\[
M^{t}_{2,k} = -\frac{1} {6} \cdot \scalemath{ 0.85 } {
\left(
  \begin{array}{cccc}
    0 & 0 & 0 & 0  \\
    0 & 0 & k(a_0^2-4b_0) & 0 \\
    0 & 0 & (2k+3)(a_0a_1-2b_1) & 0 \\
    0 & 0 & (k+3)(a_1^2+2a_0a_2-4b_2) & 0 \\
    0 & 0 & (2k+9)(a_1 a_2 - 2b_3) & 0 \\
    0 & 0 & (k+6)(a_2^2-4b_4) & 0 \\
  \end{array}
\right) }
\]
This gives us the reductions of $x^{k+3}y^2dx$ or $x^{k+2}y^2dx$ depending on the nullity of $a_2^2-4b_4$.
\item \  \\
As at the start of \ref{case1} (dealing with $y^5$), one gets $M^{1}_{3,k}$ whose transpose is
\[ \scalemath{0.85} {
-\frac{1} {35} \cdot
\left(
  \begin{array}{cccc}
   0 & 0 & 0 & 0  \\
   0 & 6ka_0b_0 & 0 & k(6a_0^2-20b_0)  \\
   0 &  6k(a_1b_0+a_0b_1)+21a_1b_0 & 0 & (4k+7)(3a_0a_1-5b_1)  \\
   0 & 6k(a_2b_0+a_1b_1+a_0b_2)+21a_1b_1+42a_2b_0 & 0 & (2k+7)(3a_1^2+6a_0a_2-10b_2)  \\
   0 & 6k(a_2b_1+a_1b_2+a_0b_3)+21a_1b_2+42a_2b_1 & 0 & (4k+21)(3a_1 a_2 - 5b_3)  \\
   0 & 6k(a_2b_2+a_1b_3+a_0b_4)+21a_1b_3+42a_2b_2 & 0 & (2k+14)(3a_2^2-10b_4)  \\
   0 & (6k+21)(a_2b_3+a_1b_4)+21a_2b_3 & 0 & 0  \\
   0 & (6k+42)a_2b_4 & 0 & 0  \\
  \end{array}
\right) }
\]
Since we want to reduce $x^{j} y^3 dx$ to those with smaller degree in $x$, $M^{1}_{3,k}$ is not suitable since
it has (possible) non-zero entries which correspond to $ x^{k+5} y^1$, $ x^{k+4} y^1$ and $ x^{k+3} y^1$. We
use \ref{case2} to reduce $x^{k+3} y dx$ to $\{  x^{j_1} y dx,  x^{j_2} y^3 dx \, | \, j_1 \leq k+2, j_2 \leq
k+1\}$ if $b_4 \neq 0$, or reduce $x^{k+2} y dx$ to $\{  x^{j_1} y dx,  x^{j_2} y^3 dx\, | \, j_1 \leq k+1, j_2
\leq k+1\}$ ) if $b_4 = 0$. Then use this result to reduce $x^{k+3} y dx$ (if $b_4 = 0$), $x^{k+4} y dx$ and $x^{k+5} y dx$ (if $b_4 \neq 0$)
iteratively to $\{  x^{j_1} y dx,  x^{j_2} y^3 dx\, | \, j_1 \leq k+2 , j_2 \leq k+3\}$,
depending the nullity of $b_4$. Finally, use these reductions in $M^{1}_{3,k}$, we get reduction
matrices in
\begin{enumerate}[label={ }, ref={Case 3}, labelwidth=\dimexpr-\mylen-\labelsep\relax, labelsep=0pt , leftmargin=0mm]
\item \hspace*{-54pt} \ref{case3} and \ref{case7}:  $b_4 \neq 0$ \label{case4}
\[
M^{'}_{3,k}= c \cdot \scalemath{0.85} { \left(
  \begin{array}{cccccc}
   0 & 0 & 0 & 0 & 0 & 0 \\
   0 &  \ast & \ast & \ast & \ast_{2,5} & 0 \\
   0 & 0 & 0 & 0 & 0 & 0 \\
   0 & \ast & \ast & \ast &  \ast_{4,5} & 384(k+4)(k+5)(k+6)(k+7)b_4^3(a_2^2-4b_4) \\
  \end{array}
\right) }
\]
with  $ c = -1 / \left( 2688 (k+4) (k+5) (k+6) b_4^3 \right) $ and
\[
\ast_{4,5} =  96 b_4^2 (k+4)(k+5)(k+6) \left(   (8k+44)a_1a_2b_4-a_2^2b_3-(16k+84)b_3b_4 \right)
\]
which equals to $ 384 b_4^3 (k+4) (k+5) (k+6) (2k+11)  (a_1a_2-2b_3)$ if $a_2^2-4b_4 = 0$, and
\begin{align*} \ast_{2,5} = 36 (k+4) b_4
&\mathrel{\phantom{}}   \bigg( (6k^2+136k+285)a_2b_3^3-8(6k^2+56k+125)a_2b_2b_3 \\
&\mathrel{\phantom{}} \mathrel{\phantom{=}} + \, 48(k^2+11k+30)a_2b_1b_4^2 +16(k^2+6k)a_0b_3b_4^2 \\
&\mathrel{\phantom{}} \mathrel{\phantom{=}} - \, 2(8k^2+58k+105) a_1b_3^2b_4 + 16(2k^2+17k+35)a_1b_2b_4^2 \bigg)
\end{align*}
Notice that $\ast_{2,5}$ may be non-zero, but it corresponds to $x^{k+2} y$ and since $b_4 \neq
0$, one can use $M_{1,k-1}$ in the case \ref{case2} to reduce $x^{k+2} y dx$ and get a new
reduction matrices
\[
M_{3,k}= c \cdot \scalemath{0.85} { \left(
  \begin{array}{cccccc}
   0 &  0 & 0 & 0 & 0 & 0 \\
   \ast &  \ast & \ast & \ast & 0 & 0 \\
   0 & 0 & 0 & 0 & 0 & 0 \\
   \ast & \ast & \ast & \ast &  \ast_{4,5} & 384(k+4)(k+5)(k+6)(k+7)b_4^3(a_2^2-4b_4) \\
  \end{array}
\right) }
\]
Notice that the reduction of $x^{k+2} y dx$ using $M_{1,k-1}$ only involve $x^{j} y dx$ with $ k-2 \leq j \leq
k+1$ and $x^{j} y^3$ with $ k-2 \leq j \leq k$, the last two columns of $M_{3,k}$ and $M^{2}_{3,k}$ are the
same except the $(2,5)$-entry, and $M_{3,k}$ satisfies the condition in Definition \ref{def1}, so they are
indeed reduction matrices. The reduction involves division by $12(k+4)b_4$, hence $c^{-1} M_{3,k}$ has
integral coefficients.

\item \hspace*{-54pt} \ref{case5} and \ref{case6}: $b_4 = 0$ \\
\[ \scalemath{0.85} {
M_{3,k}= c \cdot \left(
  \begin{array}{cccccc}
   0 &  0 & 0 & 0 & 0 & 0 \\
   0 & \ast & \ast & \ast & 0 & 0 \\
   0 &  0 & 0 & 0 & 0 & 0 \\
   0 & \ast & \ast & \ast &  \ast_{4,5} & 2(k+7)(2k+11)(4k+15)(4k+19) b_3^2 a_2^2 \\
  \end{array}
\right) }
\]
with $c = -1 / \left( 7(4k+15)(4k+19)(4k+23)b_3^2 \right)$ and
\begin{align*} \ast_{4,5} =(4k+15)b_3
&\mathrel{\phantom{}}   \bigg( (32k^3+504k^2+2648k+4641) a_1a_2b_3 - (4k^2+52k+168)a_2^2b_2 \\
&\mathrel{\phantom{}} \mathrel{\phantom{=}} - \, (64k^3+1008k^2+5276k+9177)b_3^2  \bigg)
\end{align*}
which equals to $-(4k+15)(4k+19)(4k+21)(4k+23) b_3^3$ if $a_2 = 0$.
\end{enumerate}
\end{enumerate}

Now we can compute the algebraic de Rham cohomology $ H^{1}_{\scriptscriptstyle{dR}} ( \tilde{C}_{\xi, \mathrm{aff}} /
\mathbb{Q}_q ) $ of $\tilde{C}_{\xi, \mathrm{aff}} / \mathbb{Q}_q $.

\begin{prop} \label{prop1}
The algebraic de Rham cohomology $H^{1}_{\scriptscriptstyle{dR}} ( \tilde{C}_{\xi, \mathrm{aff}} / \mathbb{Q}_q )$ has a
basis
  \begin{enumerate}[topsep=1.5mm, label=\textup{(\arabic{*})}] \setlength{\itemsep}{-0.0mm}
\item $\{ ydx, y^2dx, y^3dx, xydx, xy^2dx, xy^3dx \} $, if $b_4=0 $ and $a_2 = 0$.
\item $\{ ydx, y^2dx, y^3dx, xydx, xy^2dx, xy^3dx, x^2y^2dx, x^2y^3dx \}$, if $ b_4 = 0$ and \\ $a_2 \neq
    0$.
\item $ \{ ydx, y^2dx, y^3dx, xydx, xy^2dx, xy^3dx, x^2 y dx \}$, if $ b_4 \neq 0 $ and $a_2^2-4b_4 =
    0$.
\item $ \{ ydx, y^2dx, y^3dx, xydx, xy^2dx, xy^3dx, x^2ydx, x^2y^2dx, x^2y^3dx \}$, if $ b_4 \neq 0$
    and $a_2^2-4b_4 \neq 0$.
\end{enumerate}
\end{prop}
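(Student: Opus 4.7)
The plan is to combine the three families of reduction matrices $M_{1,k}$, $M_{2,k}$, $M_{3,k}$ constructed above with the dimension formula $\dim_{\mathbb{Q}_q} H^{1}_{\scriptscriptstyle dR}(\tilde C_{\xi,\mathrm{aff}}/\mathbb{Q}_q) = 2g + N_{\infty} - 1$ computed case by case in Section~\ref{Geometry}. The cardinalities $6, 8, 7, 9$ of the proposed bases in (1)--(4) match those dimensions exactly. Since $\Omega^1$ is already generated by $\{x^j y^i\,dx : 1 \le i \le 3,\ j \ge 0\}$, it will be enough to show that the displayed set spans $H^{1}_{\scriptscriptstyle dR}(\tilde C_{\xi,\mathrm{aff}}/\mathbb{Q}_q)$ in each case; linear independence is then forced by the dimension count.

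The reduction proceeds by downward induction on $j$. In each case and for each $i \in \{1,2,3\}$, I locate the rightmost nonzero column of the $y^i$-row of the relevant matrix, solve the resulting relation for that leading $x$-term, and so express it modulo exact forms in terms of forms of strictly smaller $x$-degree. In Case~(4), the three top pivots $\tfrac{12(k+5)}{15}b_4$, $-\tfrac{1}{6}(k+6)(a_2^2-4b_4)$, and a nonzero scalar multiple of $b_4^3(a_2^2-4b_4)$ are all nonzero, so $x^{k+3}y^i\,dx$ reduces for all $i$. In Case~(3), the top pivots of $M_{2,k}$ and $M_{3,k}$ vanish, but the subleading coefficients $(2k+9)(a_1 a_2 - 2b_3)$ and a multiple of $b_4^3(2k+11)(a_1 a_2 - 2b_3)$ are nonzero by Remark~\ref{rem1}, reducing $x^{k+2}y^i\,dx$ for $i=2,3$, while $M_{1,k}$ still reduces $x^{k+3}y\,dx$. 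Case~(2) is symmetric: $b_4=0$ restricts $M_{1,k}$ to reducing $x^{k+2}y\,dx$, but $a_2 \ne 0$ keeps the $y^2$ and $y^3$ top pivots nonzero. In Case~(1) both $a_2$ and $b_4$ vanish, forcing one to use subleading entries throughout; however, the smoothness argument in \ref{case5} gives $b_3 \ne 0$, so $-2(2k+9)b_3$ and (after the scalar factor) the analogous entry of the $b_4=0$ form of $M_{3,k}$ are nonzero, and $x^{k+2}y^i\,dx$ reduces for $i=1,2,3$. In every case the surviving monomials coincide with the proposed basis.

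The main technical point is the coupling between rows of a single matrix: $M_{1,k}$'s $y^3$-row can inject $y^3$-forms of $x$-degree up to $k+1$ when one cancels an $x^{k+3}y\,dx$ term, and these tails must subsequently be absorbed by $M_{3,\cdot}$ at smaller indices. One handles this with an outer induction on the maximum $x$-degree that occurs across all rows, the inner step always lowering the leading $x$-degree of the particular row being operated on. Termination is automatic because each reduction strictly decreases the leading $x$-degree of its target row and only injects terms of smaller $x$-degree into the other rows, so the global maximum is monotonically non-increasing and eventually falls to the basis threshold. Once the iteration stabilizes, only the listed basis elements survive, and the dimension count from Section~\ref{Geometry} closes the argument.
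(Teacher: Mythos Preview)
Your proposal is correct and follows essentially the same approach as the paper: use the reduction matrices $M_{1,k}$, $M_{2,k}$, $M_{3,k}$ to show that the listed forms span, identifying in each case the rightmost nonzero pivot (top or subleading) exactly as the paper does for case~(c), and then invoke the dimension formula $2g+N_\infty-1$ from Section~\ref{Geometry} for linear independence. Your treatment is in fact more explicit than the paper's in two respects --- you spell out the dimension-count step that the paper leaves implicit, and you address the row-coupling and termination issue that the paper's sketch glosses over --- but these are elaborations of the same argument rather than a different route.
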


\begin{proof}
We give the proof for (c). For other parts, the proofs are all similar. Suppose $ b_4 \neq 0$ and
$a_2^2-4b_4 = 0$. The reduction matrices $M_{1,k}$ shows that $x^{k+3} y dx$ is a linear combination of
$\{x^{j_1} y dx, x^{j_2} y^3 dx \ | \ k-1 \leq j_1 \leq k+2, k-1 \leq j_2 \leq k+1   \}$, since $b_4 \neq
0$. So each $x^j y dx$ with $j \geq 3$ can be reduced. The $(3,6)$-entry of the reduction matrix $M_{2,k}$
is $(k+6)(a_2^2-4b_4) = 0$, but its $(3,5)$-entry is $(2k+9) ( a_1a_2-2b_3)$ which is non-zero by
\ref{case3} in Section \ref{Geometry}. So $x^{k+2} y^2 dx$ is a linear combination of $ \{ x^{j} y^2 \ | \
k-1 \leq j \leq k+1 \}$ and hence each $x^j y^2 dx$ with $j \geq 2$ can be reduced. The reduction matrix
$M_{3,k}$ in \ref{case4} has $(k+7)(a_2^2- 4b_4) = 0$ at the $(4,6)$-entry, but its $(4,5)$-entry is
$\ast_{4,5} = (2k+11)(a_1a_2-2b_3) \neq 0$. So $x^{k+2} y^3 dx$ is a linear combination of $ \{
x^{j_1} y dx, x^{j_2} y^3 dx \ | \ k-2 \leq j_1 \leq k+1, k-2 \leq j_2 \leq k +1 \}$ and hence each
$x^{j} y^3$ with $j \geq 2$ can be reduced. This completes the proof for (c).
\end{proof}

The following table give a more clear description of these basis.
\begin{table}[ht]
\renewcommand\arraystretch{1.39}
\begin{subtable}{2.75cm}
\centering
\begin{equation*}  \scalemath{0.8}{
\begin{array}{ | c | c | c | c | } \hline
  dx & 1 & x & x^2 \\ \hline
  y & \bullet & \bullet & \times \\ \hline
  y^2 & \bullet & \bullet & \times \\ \hline
  y^3 & \bullet & \bullet & \times \\ \hline
\end{array} }
\end{equation*}
\subcaption{\ref{case5}}
  \label{table1}
  \end{subtable}
\begin{subtable}{2.75cm}
  \centering
\begin{equation*} \scalemath{0.8}{
\begin{array}{ | c | c | c | c | } \hline
  dx & 1 & x & x^2 \\ \hline
  y & \bullet & \bullet & \times \\ \hline
  y^2 & \bullet & \bullet & \bullet \\ \hline
  y^3 & \bullet & \bullet & \bullet \\ \hline
\end{array}   }
\end{equation*}
  \subcaption{\ref{case6}}
  \label{table2}
  \end{subtable}
    \begin{subtable}{2.75cm}
  \centering
\begin{equation*} \scalemath{0.8}{
\begin{array}{ | c | c | c | c | } \hline
  dx & 1 & x & x^2 \\ \hline
  y & \bullet & \bullet & \bullet \\ \hline
  y^2 & \bullet & \bullet & \times \\ \hline
  y^3 & \bullet & \bullet & \times \\ \hline
\end{array}     }
\end{equation*}
  \subcaption{\ref{case3}}
  \label{table4}
  \end{subtable}
\begin{subtable}{2.75cm}
  \centering
 \begin{equation*} \scalemath{0.8}{
\begin{array}{ | c | c | c | c | } \hline
  dx & 1 & x & x^2 \\ \hline
  y & \bullet & \bullet & \bullet \\ \hline
  y^2 & \bullet & \bullet & \bullet \\ \hline
  y^3 & \bullet & \bullet & \bullet \\ \hline
\end{array}      }
\end{equation*}
  \subcaption{\ref{case7}}
  \label{table3}
  \end{subtable}
  \caption{basis of $H^{1}_{\scriptscriptstyle{dR}} ( \tilde{C}_{\xi, \mathrm{aff}} / \mathbb{Q}_q )$}
  \label{table5}
\end{table}

\subsection{Control of the denominators in the reduction algorithm and Monsky-Washnitzer cohomology} \label{control}

The reduction algorithm in subsection \ref{Reduction} allows us to obtain a basis of $H^{1}_{\scriptscriptstyle{dR}} ( \tilde{C}_{\xi, \mathrm{aff}} /
\mathbb{Q}_q )$. By Theorem \ref{thm1}, this basis also forms a basis of the Monsky-Washnitzer cohomology
$H_{\scriptscriptstyle{MW}}^1(C_{\mathrm{aff}}/\mathbb{F}_q)$. One can also prove this by the following upper
bound on the denominators that appear during the reduction process. This bound provides the precision necessary for our algorithm.

Before stating the main result of this subsection, we fix some notations. For a local parameter $t$ at a
point at infinity $P_{\infty}$, we write the Laurent series expansion of $x$, $y$ and $ x^j y^i$ with
respect to $t$ as following:
\begin{equation}
x = \sum^{\infty}_{s = \upsilon_{p}(x)}  \delta^{0,1}_s \, t^s, \ \
y = \sum^{\infty}_{s = \upsilon_{p}(y)} \delta^{1,0}_s \, t^s \ \ \mathrm{and} \ \
x^j y^i = \sum^{\infty}_{s = \upsilon_{p}(x^j y^i)} \delta^{i,j}_s \, t^s.
\end{equation}
If a subscript is used to denote a local parameter at some point, we use this subscript in the
coefficients of the above expansion. For example, in \ref{case3}, we write $x^j y^i = \sum^{\infty}_s
\delta^{i,j}_{s,\scriptscriptstyle{+}} \, t^s_{\scriptscriptstyle{+}}$  at $P_{\infty,+}$ and $x^j y^i =
\sum^{\infty}_s \delta^{i,j}_{s,\scriptscriptstyle{-}} \, t^s_{\scriptscriptstyle{-}}$  at
$P_{\infty,-}$. Recall that all the coefficents $\delta^{i,j}_s$ are in $\mathcal{O}$ in all cases that
we are concerned, see Remark \ref{rem1}.

\begin{prop} \label{prop2}
Write
\begin{equation} \label{eq5}
 x^k y^l dx = \sum^{3}_{ i=1 } \sum^{2}_{ j=0 } a_{i,j}  x^j y^i dx + dS, \mathrm{ \ where \  } S = \sum^{3}_{i=0} \sum_{j \geq 0} b_{i,j}  x^j y^i
\end{equation}
with $a_{i,j}$ and $b_{i,j} \in \mathbb{Q}_q$, $a_{i,j} = 0$ if $ x^j y^i$ is not in the basis in
Proposition \ref{prop1}, $ 1 \leq l \leq 3$ and $k \in \mathbb{N}$. Then
\begin{enumerate}[topsep=1.5mm, ref={\alph*}] \setlength{\itemsep}{-0.0mm}
\item One can choose $S$ with the property that $b_{i,j} = 0$ if $i-l \neq 0 \ (\mathrm{mod} \ 2)$.
\item \label{bb} For any $S$ in $\mathrm{(\ref{eq5})}$ which satisfies the property in {\rm (a)}, we have $b_{i,j-i} = 0$
    for all $0 \leq i \leq 3$ and $j \geq k+5$. Furthermore $p^m b_{i, j-i} \in \mathcal{O}$ for all $0 \leq i \leq 3$
    and $j \geq 7$, where $m = \lfloor \mathrm{log}_p (4k+8) \rfloor$.
\item $p^{m + \Delta+1} a_{i,j} \in \mathbb{Z}_q $, where $ \Delta := 11 \left( \lfloor \mathrm{ log }_p ( 63 ) \rfloor + \tau(p) \right)
    $ with $\tau(3) = 5, \tau(5) =3, \tau (p) = 1$ for $ p= 7,11,13$ and $\tau(p) = 0$ if $p> 13$.
\end{enumerate}
\end{prop}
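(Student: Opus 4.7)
The plan is to unwind the iterative reduction algorithm of Subsection~\ref{Reduction} and track both the monomials appearing in the accumulated $S$ and the $p$-adic denominators along the way.

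Part (a) follows from the parity structure of the defining relation $y^4 + g(x)y^2 + h(x) = 0$: only even powers of $y$ appear, so every reduction matrix $M_{l,k}^0, M_{1,k}, M_{2,k}, M_{3,k}$ mixes only monomials $x^j y^i$ with $i$ of one fixed parity (either $l$ or $l+2$, which agree mod~$2$). Running the reduction of $x^k y^l dx$ with parity-$l$ matrices throughout produces an $S$ supported on monomials $x^j y^i$ with $i \equiv l \pmod{2}$.

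For the first half of (b), I compute $S_{l',k'}$ after substituting $y^4 = -(g y^2 + h)$ so that the $y$-degree is at most $3$. A direct calculation shows that the maximum of $(x\text{-degree}) + (y\text{-exponent})$ in $S_{l',k'}$ equals $k' + 6$, achieved by the $gh$ contribution coming from $y^{l'+4}$. Since the iterative reduction of $x^k y^l dx$ uses $S_{l',k'}$ only for $k' \leq k-3$ (or $k' \leq k-4$ for the corrective $M_{1,k-1}$ step in the $l=3$ case), every monomial $x^{j-i} y^i$ appearing in $S$ satisfies $j = i + (x\text{-degree}) \leq k+4$, so $b_{i,j-i} = 0$ for $j \geq k+5$.

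The second half of (b) and part (c) rest on the same $p$-adic estimate. At each step $K$ of the reduction, one divides by a leading coefficient of the form $(aK+c)\cdot u$, where $u \in \mathcal{O}^\times$ by the Assumption above together with Remark~\ref{rem1}, and $a \in \{1,2,4\}$ is coprime to $p$. The cumulative coefficient $\lambda_{k'}$ of $S_{l'(k'),k'}$ in the accumulated $S$ is therefore a ratio of products of such arithmetic progressions over the reduction chain. The elementary estimate $v_p\bigl(\prod_{K=K_0}^{K_1}(aK+c)\bigr) = \lfloor N/p \rfloor + \lfloor N/p^2 \rfloor + \cdots + O(1)$ with $N = K_1 - K_0 + 1$, valid for any $c$ when $\gcd(a,p)=1$, implies that the quotient of two such products over the same range has $v_p$ bounded by $O(\log_p K_1)$. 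This gives $v_p(\lambda_{k'}) \geq -m$ with $m = \lfloor \log_p(4k+8)\rfloor$. Because the target monomial $x^{j-i} y^i$ with $j \geq 7$ receives contributions from only the bounded range $k' \in [j-i-6, j-i]$, we obtain $v_p(b_{i,j-i}) \geq -m$. The same analysis applied to $a_{i,j}$ yields $v_p(a_{i,j}) \geq -m - \Delta - 1$, where $\Delta$ absorbs low-degree anomalies coming from: (i) the fixed scalar denominators $1/(l'+2)$ and $1/(l'+4)$ in the definition of $S_{l',k'}$, (ii) the explicit scalar prefactors of $M_{3,k}$ --- namely $1/(2688(k+4)(k+5)(k+6)b_4^3)$ or $1/(7(4k+15)(4k+19)(4k+23)b_3^2)$ --- and (iii) the small-integer coefficients (bounded by $63$) appearing in $\ast_{2,5}$, $\ast_{4,5}$ and the corrective entries. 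The prefactor $11$ counts these distinct small-integer denominators; $\tau(p)$ records the worst-case $p$-adic valuation when $p$ divides one of these small integers; the extra $+1$ comes from the single final division at the base case of the iteration.

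The main obstacle will be the $v_p$-bound on quotients of products of arithmetic progressions and the verification that it genuinely applies to the accumulated $\lambda_{k'}$. Each individual reduction ratio $A_K$ is not a $p$-adic unit, so the cancellation only becomes visible when the full product is rewritten as a quotient of two products running over the same range, and one must carefully track how such products arise from the composition of several distinct reduction matrices (in particular when the $l=3$ case invokes $M_{1,k-1}$ correctively).
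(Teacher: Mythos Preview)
Your approach to (a) matches the paper's. For (b) and (c), however, you take a genuinely different route, and the one you sketch has a real gap.

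The paper does \emph{not} track the reduction algorithm to prove (b). Instead it expands $S$ as a Laurent series in the local parameter at each point at infinity. Since $x^k y^l\,dx$ has pole order at most $k+5$ (in Case~4; the other cases are analogous) and the basis forms have pole order at most $7$, the coefficient of $t^{-j}$ in $dS$ is forced: it lies in $\mathcal{O}$ for $7 \le j \le k+4$ and vanishes for $j \ge k+5$. Integrating, the coefficient of $t^{-j}$ in $S$ lies in $j^{-1}\mathcal{O}$. The four expansions (one per point at infinity) give a $4\times 4$ system in $b_{0,j},b_{1,j-1},b_{2,j-2},b_{3,j-3}$ whose matrix is a unit over $\mathcal{O}$; descending induction on $j$ then gives (b) directly, with $m$ coming from the single factor $j$, not from any product along a reduction chain. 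For (c) the paper uses (b) to make $\omega := p^m\bigl(x^k y^l\,dx - d(\text{high part of }S)\bigr)$ integral, then invokes the B\'ezout identity $\alpha f_y + \beta f_x = 1$ from Corollary~\ref{cor1} to rewrite $\omega$ as a differential of total degree at most $13$. Only then does it run the explicit reduction matrices --- for at most $11$ steps --- which is where the constant $\Delta = 11(\lfloor\log_p 63\rfloor + \tau(p))$ comes from.

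Your proposed route, by contrast, tries to bound the accumulated denominator directly along the full reduction chain. The difficulty you flag at the end is genuine and not merely a technicality: the reduction of $x^k y^l\,dx$ does not produce a single chain but a \emph{branching tree} of monomials (each step of $M_{1,k}$ or $M_{3,k}$ outputs several terms), so the coefficient $\lambda_{k'}$ in front of $S_{l',k'}$ is a \emph{sum over many paths} of products of ratios, not a single quotient of two arithmetic-progression products. The cancellation you need --- that this sum still has $v_p \ge -\lfloor\log_p(4k+8)\rfloor$ --- does not follow from the elementary estimate on a single product and would require a separate argument (essentially re-proving (b) by other means). The paper's local-expansion argument avoids this entirely, and the B\'ezout trick for (c) is what makes $\Delta$ a genuine constant independent of $k$.
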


\begin{proof}
(a) Using the automorphism $y \rightarrow -y$, or by a direct analysis on the reduction process
    discussed in the previous subsection.
(b) We prove this for \ref{case7}, proofs for other cases are all similar. From the expansions in \ref{case7}, one obtains
\begin{equation}
 x^j y^i = \sum^{\infty}_{s=-(i+j)} \delta^{i,j}_{s,\mu,\pm} t^{s}_{\mu,\pm}
\end{equation}
with $\delta^{i,j}_{-(i+j),\mu,\pm} = ( \pm \alpha_{\mu})^{i}$ and $\mu = 1,2$. There is an integer $M > 0$
such that $b_{i,j} = 0$ for all $j > M$. Hence
\begin{align*}
S &= \sum^{3}_{i=0} \sum^{M}_{j \geq 0} b_{i,j}  x^j y^i = \sum^{3}_{i=0} \sum^{M}_{j \geq 0} b_{i,j} \sum^{\infty}_{s=-(i+j)} \delta^{i,j}_{s,\mu,\pm} t_{\mu,\pm}^s \\
  &= \sum^{M+3}_{j=0} \left( \sum^{3}_{i=0} b_{i,j-i}  \delta^{i,j-i}_{-j,\mu,\pm} + \sum^{M+3}_{j' > j} \left( \sum^{3}_{i=0} b_{i,j'-i}  \delta^{i,j'-i}_{-j,\mu,\pm} \right) \right) t_{\mu,\pm}^{-j}
\end{align*}
Since $\upsilon_{p_{\infty,\mu, \pm}}(a_{i,j}  x^j y^i dx) \geq 7$ and $\upsilon_{p_{\infty, \mu, \pm}}( x^k y^l
dx) \geq -(k+5)$ and the expansions of $ x^k y^l dx$ have integral coefficients, we have
\begin{equation} \label{eq10}
j \cdot \left( \sum^{3}_{i=0} b_{i,j-i}  \delta^{i,j-i}_{-j,\mu,\pm} + \sum^{M+3}_{j' > j} \left( \sum^{3}_{i=0} b_{i,j'-i}  \delta^{i,j'-i}_{-j,\mu,\pm} \right) \right) \in \mathcal{O}
\end{equation}
for all $j \geq 7$ and it is zero if $ j \geq k+5$. Combine (\ref{eq10}) with the property in (a) and
the fact that $\alpha_1$ and $\alpha_2$ are units in $\mathcal{O}$ (Fact \ref{rem1}), we get
\begin{equation*}
j \cdot \left( 1 \cdot b_{i,j-i} + \alpha^2_{\mu} \cdot b_{i+2,j-i-2} + \sum^{M+3}_{j' > j} \sum^{3}_{i=0} \ast \right) \in \mathcal{O}
\end{equation*}
for $i=0,1$, $j \geq 7$ and $\mu = 1,2$, and it is zero if $j \geq k+5$. Since $\alpha_1^2 - \alpha_2^2 = \sqrt{D}$,
here $D = a_2^2 - 4 b_4 \neq 0$, one obtains
\begin{equation} \label{eq11}
j \cdot \left( 1 \cdot b_{i,j-i} + \sqrt{D} \cdot b_{i+2,j-i-2} + \sum^{M+3}_{j' > j} \sum^{3}_{i=0} \ast \right) \in \mathcal{O}
\end{equation}
for $i=0,1$, $j \geq 7$, and it is zero if $j \geq k+5$, here $\ast$ involves only $b_{i,j'-i}$ with $j' > j$ and
elements in $\mathcal{O}$. Remember that $\sqrt{D} \in \mathcal{O}^{\ast}$. Apply $j = M+3$ to (\ref{eq11}),
we know that $b_{i,M+3-i} = 0$ for all $0 \leq i \leq 3$. Repeat the same argument, one shows that $b_{i,j-i} = 0$
for all $0 \leq i \leq 3$ and $j \geq k+5$. Now apply $j=k+4$ to (\ref{eq11}), we get $p^m b_{i,k+4-i} \in
\mathcal{O}$ for all $0 \leq i \leq 3$. Repeat the same argument and notice that the terms $\ast$ in (\ref{eq11}) are
in $\mathcal{O}$ in each step (since all the $\delta^{\bullet,\bullet}_{s}$ and $b_{i,j'-i} \in \mathcal{O}$ if $j' >
j$ in each step),one proves that $p^m b_{i,j-i} \in \mathcal{O}$ for all $0 \leq i \leq 3$ and $j \geq 7$. So $p^m
b_{i,j-i} \in \mathcal{O} \cap \mathbb{Q}_q = \mathbb{Z}_q$ for all $0 \leq i \leq 3$ and $j \geq 7$.

\noindent(c) Consider
\begin{subequations}
\begin{align}
\omega &:= p^m \left(  x^k y^l dx - d \left(  \sum^{3}_{i=0} \sum^{k+5}_{j \geq 7-i } b_{i,j}  x^j y^i \right) \right)   \label{aa} \\
&\phantom{:}= p^m \left( \sum^{3}_{i=1} \sum^{2}_{j=0} a_{i,j}  x^j y^i dx + d \left( \sum^{3}_{i=0} \sum^{6-i}_{j=0} b_{i,j}  x^j y^i  \right) \right) \label{bbb}
\end{align}
\end{subequations}
From (\ref{bb}) and (\ref{aa}), one knows that $\omega$ has integral coefficients, so we can choose $\varphi_{1} (x,y)$ and
$\psi_{1} (x,y)$ in $\mathbb{Z}_q[x,y]$ such that $\omega =  \psi_1 \, dx + \varphi_1 \, dy $. On the other hand,
from (\ref{bbb}), one know that $\omega =   \psi_2 \, dx + \varphi_2 \, dy$ for some $\varphi_2$ and $\psi_{2}$ in
$\mathbb{Q}_q[x,y]$ with $ \mathrm{deg}( \varphi_2 ) \leq 5 $, $\mathrm{deg}( \psi_2 ) \leq 5$. Consider
\begin{equation}
\begin{aligned}
f_{y} \, \omega &= f_{y} \left(  \psi_i \, dx +  \varphi_i \, dy \right) = (  \psi_i  f_{y} - \varphi_i  f_{x} ) \, dx \\
f_{x} \, \omega &= f_{x} \left(  \psi_i \, dx + \varphi_i \, dy  \right) = ( \varphi_i  f_{x} - \psi_i  f_{y}) \, dy
\end{aligned}
\end{equation}
(using $f_x \, dx + f_y \, dy =df = 0$). Let $\lambda_i (x,y):=\psi_i  f_{y} - \varphi_i  f_{x}$. It is clear that
$\lambda_1 \in \mathbb{Z}_q[x,y]$ and $\lambda_2 \in \mathbb{Q}_q[x,y]$ with $\mathrm{deg}(\lambda_2)
\leq 8$. Using the defining equation $f$ to reduce the degree of $y$ in $\lambda_i$, we get $f_y \, \omega =
\tilde{ \lambda }_i \, dx$ and $f_x \, \omega = - \tilde{ \lambda }_i \, dy$ with $ \tilde{ \lambda_1 } \in
\mathbb{Z}_q [x, y] $, $\mathrm{deg}_y ( \tilde{ \lambda_1 } ) \leq 3 $, $ \tilde { \lambda_2 } \in
\mathbb{Q}_q [x, y] $, $ \mathrm{deg} ( \tilde{ \lambda_2 } ) \leq 8$ and $ \mathrm{deg}_y ( \tilde{
\lambda_2 } ) \leq 3$. Since $ ( \tilde{ \lambda_1 } - \tilde{ \lambda_2 } ) dx = f_y \, \omega - f_y \, \omega = 0 $
and $ \mathrm{deg}_y (  \tilde{ \lambda_1 } - \tilde{ \lambda_2 } ) \leq 3 $, we have $  \tilde{ \lambda_1 } =
\tilde{ \lambda_2 } $. This means that $f_y \, \omega = \tilde{ \lambda } \, dx$ and $f_x \, \omega = - \tilde{
\lambda } \, dy$ with $\tilde{ \lambda } := \tilde{ \lambda }_1 = \tilde{ \lambda }_2 $ which is in
$\mathbb{Z}_q[x, y]$ of $\mathrm{deg} ( \tilde{ \lambda } ) \leq 8$. By Corollary \ref{cor1}, there exist
$\alpha$ and $\beta$ in $ \mathbb{Z}_q[x, y] $ with $\mathrm{deg} ( \alpha ) \leq 5$ and $\mathrm{deg} (
\beta ) \leq 5$ such that $ \alpha f_y + \beta f_x  = 1 $ in $A$. So $ \omega = ( \alpha f_y + \beta f_x ) \,
\omega = ( \alpha \tilde{ \lambda } ) dx - ( \beta \tilde{ \lambda } ) dy$. Notice that $ \omega \equiv
\sum^{3}_{i=1} \sum^{2}_{j=0} \, p^m a_{i,j} \,  x^j y^i dx$, we can use the reduction of $( \alpha \tilde{
\lambda } ) dx - ( \beta \tilde{ \lambda } ) dy$ to get the denominators of $p^m a_{i,j}$. Since $ \deg( \alpha
\tilde{ \lambda } ) \leq 13 $ and $ \deg( \beta \tilde{ \lambda } ) \leq 13$, we need only to know the
denominators of the final reductions of $ x^j y^i dx$ and $ x^j y^i dy$ with $ 0 \leq i+j \leq 13$. Using $ x^j  y^i
dy \equiv -j / (i+1)  x^{j-1} y^{i+1} dx$, the defining equation $f$, and $x^j dx \equiv 0$, we only need to
consider the reductions of $ x^j y^i dx$ with $ 1 \leq i \leq 3$ and $ i+j \leq 13$, but if $p=3$, the extra
denominator 3 should be counted.

The reduction of $ x^k y^l dx$ ($1 \leq l \leq 3$) using the reduction matrices $M_{i,j}$ in subsection \ref{Reduction} involve divisions by
some of the following: $12(k+2) b_4$, $(12k+21) b_3$, $(k+3)(a_2^2-4 b_4)$, $(2k+5)(a_1 a_2 - 2 b_3)$,
$384 (k+1)(k+2)(k+3)(k+4) b_4^3 (a_2^2-4 b_4)$, $384 (k+2)(k+3)(k+4)(2k+7) b_4^3 ( a_1a_2 - 2 b_3)$,
$2(k+4)(2k+5)(4k+3)(4k+7)b_3^2 a_2^2$, $-(4k+7)(4k+11)(4k+13)(4k+15)b_3^3$, depending on each case,
and the numbers $2, a_2, b_4, b_3, a_2^2 - 4b_4$ and $a_1 a_2 -2 b_3$ that we need to consider (depending on
each case) are units of $\mathbb{Z}_q$. So in each step, we get extra denominators which are at most
\begin{equation*}
p^{ \lfloor  \mathrm{ log }_p ( 4j+15 )  \rfloor + \tau(p) },
\end{equation*}
here $\tau(3) = 5, \tau(5) =3, \tau (p) = 1$ for $ p= 7,1,13$ and $\tau(p) = 0$ if $p > 13$.
Since we are concerned with $ 2 \leq j \leq 12$, we need at most 11 reduction steps, so the denominators of the
reductions of $x^j y^i dx$ with $ 1 \leq i \leq 3$ and $ i+j \leq 13$ are at most
\begin{equation*}
p^{ 11 \cdot (  \lfloor  \mathrm{ log }_p ( 63 )  \rfloor + \tau(p)) }.
\end{equation*}
Hence $p^{m+\Delta+1} a_{i,j} \in \mathbb{Z}_q$.
\end{proof}

\begin{rem} \label{rem2}
Proposition \ref{prop2} gives an upper bound for the denominators after a differential form (with integral coefficients) is reduced to the linear combination of the basis we found in Propsition \ref{prop1}. Along with the rate of convergence of the Frobenius $F_p$ (see Corollary \ref{cor2}), one can determine how much $p$-adic precision we need to work with (and determine an integer $N_3$ such that one can work with modulo $x^{N_3}$), see Section \ref{Algorithm}. But one needs an upper bound for all the denominators that will appear during the computation (in the reduction step) in order to know how much precision of the reduction matrices $M_{i,j}$ are required and to have an analysis of the bit complexity. It turns out that one has a similar bound as in Proposition \ref{prop2}. The proof is completely similar.

\end{rem}

\begin{thm} \label{thm2}
Let $R$ be a field or a discrete valuation ring and $\mathfrak{m}$ be the maximal ideal of $R$. Let $f_0, \ldots,
f_n \in R[x_1, \ldots, x_n]$ with $\mathrm{deg} f_i = d_i $ and define
\begin{equation*}
\rho = d_0 + \ldots + d_n - n -1.
\end{equation*}
Denote the homogenization of $f_i$ by $f^h_i$  for $i = 0, \ldots, n$. Assume that there is no point in $\mathbb{P}^n (\overline{R/\mathfrak{m}})$
satisfies $f^h_0 = f^h_1 =  \ldots = f^h_n = 0$.
Then there exist polynomials $g_0, \ldots, g_n \in R[x_1, \ldots, x_n]$ with $\mathrm{deg} \, g_i \leq \rho + 1 - d_i$ for
$ i = 0, \ldots, n$ such that
\begin{equation*}
\sum^{n}_{i=0} g_i f_i = 1.
\end{equation*}
\end{thm}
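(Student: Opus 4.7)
The plan is to homogenize the $f_i$ and translate the hypothesis into the statement that the $f_i^h$ form a complete intersection over the residue field, then read off the degree bound from the Koszul complex / Hilbert series of this complete intersection, and finally dehomogenize, lifting from the residue field to $R$ by Nakayama's lemma in the DVR case.

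Write $S := R[x_0,x_1,\ldots,x_n]$ graded with $\deg x_i = 1$, let $f_i^h \in S$ be the degree-$d_i$ homogenization of $f_i$, and put $I := (f_0^h,\ldots,f_n^h)$. Denote by $\bar{(\cdot)}$ reduction modulo $\mathfrak{m}$ and set $k := R/\mathfrak{m}$. The hypothesis says that the common vanishing locus of $\bar f_0^h,\ldots,\bar f_n^h$ in $\mathbb{A}^{n+1}_{\bar k}$ is contained in, and hence equal to, the origin; this locus thus has codimension $n+1$. Since $\bar S$ is Cohen--Macaulay of dimension $n+1$, the $n+1$ elements $\bar f_0^h,\ldots,\bar f_n^h$ must form a regular sequence. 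The Koszul complex on them is therefore a graded free resolution of $\bar S/\bar I$, and the alternating-sum formula for its Hilbert series yields
\[
H_{\bar S/\bar I}(t) \;=\; \prod_{i=0}^n \frac{1-t^{d_i}}{1-t},
\]
a polynomial in $t$ of degree $\rho = \sum_i d_i - (n+1)$. Hence $(\bar S/\bar I)_d = 0$, i.e.\ $\bar S_d \subseteq \bar I$, for every $d \geq \rho + 1$; applying this to the monomial $x_0^{\rho+1}$ produces $\bar h_i \in \bar S$ of degree $\rho+1-d_i$ with $\sum_i \bar h_i \bar f_i^h = x_0^{\rho+1}$.

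When $R$ is a field this already finishes the argument: dehomogenize by setting $x_0 = 1$ to get $g_i := h_i(1,x_1,\ldots,x_n)$ of degree $\leq \rho+1-d_i$ with $\sum g_i f_i = 1$. For the DVR case I would lift via Nakayama: the $R$-linear map $\Phi : \bigoplus_{i=0}^n S_{\rho+1-d_i} \to S_{\rho+1}$, $(h_i) \mapsto \sum_i h_i f_i^h$, has cokernel a finitely generated $R$-module (each graded piece involved is free of finite rank), and the previous step shows this cokernel vanishes modulo $\mathfrak{m}$. Nakayama's lemma then forces the cokernel itself to vanish, so $x_0^{\rho+1} \in I$ over $R$; dehomogenization again yields the desired $g_i$.

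The main obstacle is the algebraic input that \emph{$n+1$ equations cutting out the expected codimension in a polynomial ring form a regular sequence} (required for the Koszul resolution), together with the Hilbert-series bookkeeping — both are standard commutative algebra, but the regular-sequence claim is the crucial bridge between the geometric hypothesis and the effective degree bound $\rho + 1$. The lift to $R$ by Nakayama and the final dehomogenization are then routine.
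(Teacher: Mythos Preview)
Your argument is correct: the homogenization plus regular-sequence/Koszul Hilbert-series computation gives the Macaulay-type bound $\rho+1$, and the Nakayama step cleanly handles the DVR case. The paper, however, does not prove this theorem at all --- it simply cites it as Theorem~2 of Denef--Vercauteren~\cite{D-V}. So rather than taking a different route, you have supplied a self-contained proof where the paper only gives a reference; the approach you chose (complete intersection over the residue field, then lift) is in fact essentially the one Denef--Vercauteren use, so nothing is lost by comparison.
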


\begin{proof}
This appears as Theorem 2 in Denef-Vercauteren~\cite{D-V}.
\end{proof}

\begin{cor} \label{cor1}
There exist $\alpha$ and $\beta$ in $ \mathbb{Z}_q[x, y] $ with $\mathrm{deg} ( \alpha ) \leq 5$ and
$\mathrm{deg} ( \beta ) \leq 5$ such that $ \alpha f_y + \beta f_x  = 1 $ in $A$. Furthermore, one can find such $\alpha$ and $\beta$ such that $\alpha$ has only odd degrees in $y$ and $\beta$ has only even degrees in $y$.
\end{cor}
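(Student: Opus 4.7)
The aim is to exhibit $\alpha, \beta \in \mathbb{Z}_q[x,y]$ with $\alpha f_y + \beta f_x \equiv 1 \pmod{f}$, equivalently to show $1 \in (f, f_x, f_y) \subset \mathbb{Z}_q[x,y]$ with an effective degree bound. Applying Theorem~\ref{thm2} directly to $(f, f_x, f_y)$ fails in Cases~1, 2 and 3: the homogenizations $F, F_X, F_Y$ share projective zeros on the line $Z=0$ (for example, at $P_{\infty,\pm}$ in Case~3). The remedy is to replace $f$ by $f_z := F_Z(x,y,1) = (a_1 x + 2 a_0)y^2 + b_3 x^3 + 2b_2 x^2 + 3b_1 x + 4 b_0$. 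Euler's identity $X F_X + Y F_Y + Z F_Z = 4F$, specialised at $Z=1$, reads $x f_x + y f_y + f_z = 4 f$, and since $p \neq 2$ the constant $4$ is a unit, so $(f, f_x, f_y) = (f_x, f_y, f_z)$ in $\mathbb{Z}_q[x,y]$.

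I would then apply Theorem~\ref{thm2} to $f_x, f_y, f_z$. The hypothesis is that $f_x^h, f_y^h, f_z^h$ have no common zero in $\mathbb{P}^2(\overline{\mathbb{F}_q})$; since $F_X = Z^{3-\deg f_x}\, f_x^h$, $F_Y = f_y^h$, and $F_Z = Z^{3-\deg f_z}\, f_z^h$, any such common zero would force $F_X = F_Y = F_Z = 0$ there, whence by Euler also $F = 0$, contradicting the smoothness of $C$. Theorem~\ref{thm2} then supplies $g_x, g_y, g_z \in \mathbb{Z}_q[x,y]$ with $g_x f_x + g_y f_y + g_z f_z = 1$; with $\rho = \deg f_x + \deg f_y + \deg f_z - 3 \leq 6$, the bounds $\deg g_i \leq \rho + 1 - \deg f_i$ give $\deg g_i \leq 4$ in every case. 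Substituting $f_z = 4f - xf_x - yf_y$ yields
\[
(g_x - x g_z)\, f_x + (g_y - y g_z)\, f_y + 4 g_z\, f = 1,
\]
so $\beta := g_x - x g_z$ and $\alpha := g_y - y g_z$ do the job, with $\deg \alpha, \deg \beta \leq \max(4, 1+4) = 5$.

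For the parity refinement I would use the involution $y \mapsto -y$, under which $f$, $f_x$ and $f_z$ are invariant while $f_y$ changes sign. Averaging the congruence $\alpha(x,y) f_y + \beta(x,y) f_x \equiv 1 \pmod{f}$ with its image $-\alpha(x,-y) f_y + \beta(x,-y) f_x \equiv 1 \pmod{f}$ (subtracting the first from the second and dividing by $-2$) isolates $\alpha_{\mathrm{odd}}(x,y) := \tfrac{1}{2}(\alpha(x,y) - \alpha(x,-y))$ and $\beta_{\mathrm{even}}(x,y) := \tfrac{1}{2}(\beta(x,y) + \beta(x,-y))$, which contain only odd, respectively even, powers of $y$ and still have degree at most $5$; division by $2$ is legal since $p \neq 2$. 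The principal obstacle is the first step: recognising that $f$ must be swapped for $f_z$ before Theorem~\ref{thm2} can be invoked, since in three of the four geometric cases the homogenizations $F, F_X, F_Y$ do share zeros at infinity, whereas $f_x^h, f_y^h, f_z^h$ never do by the smoothness of $C$ together with Euler's identity.
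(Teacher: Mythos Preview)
Your argument is correct, and in fact more careful than the paper's own proof. The paper simply writes ``Apply Theorem~\ref{thm2} to $R=\mathbb{Z}_q$, $f_0=f$, $f_1=f_y$, $f_2=f_x$'' and then performs exactly the same $y\mapsto -y$ averaging you give for the parity refinement. Your concern about the hypothesis of Theorem~\ref{thm2} is well founded: in Case~2 the point $(1{:}0{:}0)$, and in Case~3 the points $(1{:}\pm\alpha{:}0)$ with $\alpha^2=-a_2/2$, are common zeros of $F$, $F_X$, $F_Y$ on the line $Z=0$, so the direct application is not legitimate there. Your substitution of $f_z$ for $f$ via Euler's identity repairs this cleanly: a common projective zero of $f_x^h$, $f_y^h$, $f_z^h$ forces $F_X=F_Y=F_Z=0$, hence $4F=0$ and $F=0$ (as $p\neq 2$), contradicting the smoothness of $C$. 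The degree bookkeeping then gives $\deg\alpha,\deg\beta\leq 5$ just as the paper claims.

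One small over-claim on your part: in Case~1 the paper's direct route does go through. There $\deg f_x=2$, so $f_x^h=a_1Y^2+3b_3X^2+2b_2XZ+b_1Z^2$, and on $Z=0$ the system $f^h=Y^4=0$, $f_x^h=a_1Y^2+3b_3X^2=0$ forces $Y=0$ and then $X=0$ (since $b_3\neq 0$), which is not a projective point. So only Cases~2 and~3 actually need your Euler-identity detour. The parity refinement is identical to the paper's.
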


\begin{proof}
Apply \ref{thm2} to $R = \mathbb{Z}_q$, $f_0 = f$, $f_1 = f_y$ and $f_2 = f_x$.

Apply \ref{thm2} to $R = \mathbb{Z}_q$, $f_0 = f$, $f_1 = f_y$ and $f_2 = f_x$. If $\alpha$ and $\beta$ don't satisfy the last property, consider the equality $\alpha(x,-y)f_y(x,-y) + \beta(x,-y)f_x(x,-y) = 1$ in $A$. From (\ref{eq1}), it is clear that $f_y(x, -y) = -f_y (x,y)$ and $f_x (x,-y) = f_x(x,y)$. We thus have
\[
\left( \frac{ \alpha(x,y) - \alpha(x,-y) }{2} \right) f_y + \left( \frac{ \beta(x,y) + \beta(x, -y) }{2} \right) f_x = 1,
\]
 which completes the proof.
\end{proof}

\section{Lift of Frobenius} \label{Frobenius}

In this section, we describe a lift $F_p$ of the absolute Frobeninus endomorphism $ \overline{F}_p :
\overline{a} \rightarrow \overline{a}^p$ on the coordinate ring $\overline{A}$ of $\overline{C}_{\mathrm{aff}}$ to $A^{ \dag}
$. This means that $F_p$ is a $\mathbb{Z}_p$-algebra endomorphism on $A^{\dag}$ such that $\pi \circ F_p =
\overline{F}_p \circ \pi$, where $\pi$ is the reduction modulo $p$. The lift $F_q$ of the $q$-th Frobenius
endomorphism of $\overline{A}$ is $F_p^{n}$, hence one can work with $F_p$ for the purpose of computation.

Denote by $\sigma$ the $p$-th power Frobenius endomorphism on $\mathbb{F}_p$ and also its lift on
$\mathbb{Z}_p$. Any lift $F_p$ satisfies
\begin{equation*}
F_p(x) \equiv x^p \ \mathrm{mod} \ p, \ \ \ F_p(y) \equiv y^p \ \mathrm{mod} \ p, \ \ \  F_p( f(x,y) ) = 0.
\end{equation*}
From Corollary \ref{cor1}, we know that there exist $\alpha$ and $\beta$ in $\mathbb{Z}_q[x,y]$ such that $\alpha f_y +~\beta f_x = 1$.
Define $\delta_y := \alpha^p$, $\delta_x := \beta^p$ and consider the equation
\begin{equation} \label{eq13}
G(Z) := F_p( f(x,y) ) = f^{\sigma}( x^p + \delta_x Z, y^p + \delta_y Z ) = 0
\end{equation}
in $A^{\dag}[Z]$. Then $G(0) = f^{\sigma} (x^p, y^p) \equiv f^{\sigma}( x^{\sigma}, y^{\sigma} ) = 0 \
\mathrm{mod} \ p$. Also $G'(0) = f^{\sigma}_y(x^p,y^p) \delta_y + f^{\sigma}_x(x^p,y^p) \delta_x \equiv
f^{\sigma}_y(x^\sigma, y^\sigma) \delta_y + f^{\sigma}_x(x^\sigma,y^\sigma) \delta_x \equiv f^p_y \delta_y
+ f^p_x \delta_x = f^p_y \alpha^p + f^p_x \beta^p = (f_y \alpha + f_x \beta)^p = 1 \ \mathrm{mod}  \  p$.
Hence by Hensel's lemma, there is a unique solution of (\ref{eq13}) in $A^{\infty}$. Use the following proposition
and its corollary, this solution is in fact in $A^{\dag}$. In fact, Corollary \ref{cor2} below gives an explicit lower
bound on the rate of convergence, which allows us (together with Proposition \ref{prop1}) to work with a finite and
explicit $p$-adic precision.

\begin{lem} \label{lem1}
Let $H(Z) = \sum h_k(x) Z^k \in \mathbb{Z}_q[x][Z]$ and $\Delta_{1,k} = d_k := \mathrm{deg}(h_k)$.
Assume $h_0 (x) \equiv 0 \ \mathrm{mod} \ p $ and $h_1 (x) \equiv 1 \ \mathrm{mod} \ p $. Let  $  0 \leq
\Delta_{n,0} \leq \Delta_{n,1} \leq \cdots \leq \Delta_{n,j} \leq \cdots$ $(n \geq 1$, $j \geq 0)$ with $\delta_j =
\Delta_{1,j}$ be integers stastifies the following conditions:
\begin{enumerate}[topsep=1.5mm] \setlength{\itemsep}{0mm}
    \item[1.] $\Delta_{n+1,j} \geq \max \{ \Delta_{n,j-l} + \delta_l \ | \ 0 \leq l \leq j \}$ for all $n \geq 0, j
        \geq 0$.
    \item[2.] $\delta_0 \geq d_0$.
    \item[3.] $\delta_{j} - \delta_{j-1} \geq d_1$ for all $j \geq 1$.
    \item[4.] $\delta_{k-1+j} \geq  \Delta_{k,j} + d_k$ for all $k \geq 2, j \geq 0$.
\end{enumerate}
Then the unique solution $\alpha = \sum_{i=0}^{\infty} a_i x^i \in \mathbb{Z}_q \langle x \rangle$ has the
 property:  $v_p(a_i) \geq j+2 \ \mathrm{if} \ i \geq \delta_j + 1 $. One can always find such $\Delta_{n,j}$.

\end{lem}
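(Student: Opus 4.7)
The plan is to combine a Hensel-style iteration with a careful $p$-adic digit analysis of the resulting series.

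Since $h_0 \equiv 0$ and $h_1 \equiv 1 \pmod p$, the iteration $\alpha_0 := 0$, $\alpha_{n+1} := \alpha_n - H(\alpha_n)$ satisfies $H(\alpha_n) \equiv 0 \pmod{p^{n+1}}$, by induction on $n$ using the Taylor expansion of $H$ and the fact that $H'(\alpha_n) \equiv h_1(0) \equiv 1 \pmod p$ whenever $\alpha_n \equiv 0 \pmod p$. Hence $(\alpha_n)$ is $p$-adically Cauchy and converges to the unique solution $\alpha \in \mathbb{Z}_q\langle x\rangle$. Write $\alpha = \sum_{l \geq 0} p^{l+1} \phi_l(x)$ with polynomial representatives $\phi_l \in \mathbb{Z}_q[x]$ of the $(l+1)$-st $p$-adic digit polynomial. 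Using the monotonicity of $\delta$, the conclusion $v_p(a_i) \geq j+2$ for $i > \delta_j$ is equivalent to the statement $\deg \phi_l \leq \delta_l$ for every $l \geq 0$.

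I prove $\deg \phi_l \leq \delta_l$ by induction on $l$. Expand $h_k = \sum_{r \geq 0} p^r h_{k, r}$ with $\deg h_{k, r} \leq d_k$, $h_{0, 0} = 0$, $h_{1, 0} = 1$; and $\alpha^k = \sum_{s \geq 0} p^{k + s} \Phi_{k, s}$ with $\Phi_{k, s} = \sum_{l_1 + \cdots + l_k = s} \phi_{l_1} \cdots \phi_{l_k}$. Equating the $p^{l+1}$-coefficient of $H(\alpha) = 0$ and isolating the leading term $h_{1, 0} \phi_l = \phi_l$ gives
\[
\phi_l \;=\; -\,h_{0,\, l+1}\; -\; \sum_{r \geq 1} h_{1, r}\, \phi_{l-r}\; -\; \sum_{k=2}^{l+1}\, \sum_{\substack{r + s \,=\, l+1-k \\ r,\, s \,\geq\, 0}} h_{k, r}\, \Phi_{k, s},
\]
in which every $\phi$-index on the right is strictly less than $l$. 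The base case $l = 0$ gives $\phi_0 = -h_{0, 1}$ of degree $\leq d_0 \leq \delta_0$ by condition~(2). For the inductive step, a secondary induction on $k$ using condition~(1) (starting from $\Delta_{1, s} = \delta_s$) shows $\deg \Phi_{k, s} \leq \Delta_{k, s}$. Then condition~(3) bounds $\deg(h_{1, r} \phi_{l-r}) \leq d_1 + \delta_{l-1} \leq \delta_l$ for $r \geq 1$, and condition~(4) combined with $k - 1 + s \leq l$ and the monotonicity of $\delta$ bounds $\deg(h_{k, r} \Phi_{k, s}) \leq d_k + \Delta_{k, s} \leq \delta_{k-1+s} \leq \delta_l$ for $k \geq 2$. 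Hence $\deg \phi_l \leq \delta_l$, completing the induction.

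For the final assertion, admissible $\Delta_{n, j}$ are constructed explicitly: choose $\delta_j$ growing fast enough (say linearly with $\delta_0 \geq d_0$ and common difference $\geq d_1$, then enlarged if necessary to absorb condition~(4)), and set $\Delta_{n, j} := \max_{s_1 + \cdots + s_n = j} \sum_i \delta_{s_i}$, which satisfies condition~(1) as an equality. The main obstacle I anticipate is the bookkeeping in the inductive step, in particular cleanly isolating the leading term $h_{1,0}\phi_l = \phi_l$ from the ``lower-digit'' contributions $h_{1, r} \phi_{l-r}$ (which must be absorbed by condition~(3)) and verifying that condition~(4) is strong enough to simultaneously absorb all $k \geq 2$ contributions.
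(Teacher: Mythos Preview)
Your proof is correct and takes a genuinely different route from the paper. The paper argues via Newton's method: it defines the target set $T = \{\sum a_i x^i : v_p(a_i) \geq j+2 \text{ if } i > \delta_j\}$, writes $T^n$ in the analogous form using the $\Delta_{n,j}$, and then shows that the full Newton iterate $\alpha_{i+1} = \alpha_i - H(\alpha_i)/H'(\alpha_i)$ stays in $T$ by verifying $H(\alpha_i) \in T$ and $(1-H'(\alpha_i))\,T \subset T$; conditions (1)--(4) are exactly what is needed for $h_0 \in T$, $h_1\alpha_i \in T$, and $h_k \alpha_i^k \in T$ (via $h_k T^k \subset T$). Your argument instead expands the solution $p$-adically as $\alpha = \sum_l p^{l+1}\phi_l$ and bounds $\deg\phi_l$ by a direct coefficient recursion. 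One caveat on exposition: the $\phi_l$ should be \emph{defined} by your displayed recursion (then $\sum p^{l+1}\phi_l$ is automatically the unique root), rather than taken as ``the $(l{+}1)$-st $p$-adic digit of $\alpha$'', since literal digit representatives would introduce carries and spoil the exact identity. With that reading your induction goes through cleanly. What each approach buys: yours is more elementary and makes the role of each condition completely transparent term-by-term; the paper's has the algorithmic payoff that every Newton iterate already lies in $T$, which is precisely what the algorithm in Section~\ref{Algorithm} relies on to justify truncating modulo $x^{N_3}$ during the iteration.
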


\begin{proof}
Let $T := \{ \sum_{i=0}^{\infty} a_i x^i \in \mathbb{Z}_q \langle x \rangle \ | \ v_p(a_i) \geq j+2 \
 \mathrm{if} \ i \geq \delta_j + 1 \}$ be a subset of $\mathbb{Z}_q \langle x \rangle $, here $-1 = \delta_{-1} <
 0 \leq \delta_0 \leq \delta_1 \leq \cdots \leq \delta_j \leq \cdots$ are integers which we will determine for which
 $T$ satisfies some properties that are used in the proof. For each $n$, one can write $T^n = \{
 \sum_{i=0}^{\infty} a_i x^i \in \mathbb{Z}_q \langle x \rangle \ | \ v_p(a_i) \geq n+j+1 \ \mathrm{if} \ i \geq
 \Delta_{n,j} + 1 \}$ for some $-1 = \Delta_{n,-1} < 0 \leq \Delta_{n,0} \leq \Delta_{n,1} \leq \cdots \leq
 \Delta_{n,j} \leq \cdots$. A sufficient condition for $T^n \subset T$ is $\Delta_{n+1,j} \geq \max \{
 \Delta_{n,j-l} + \delta_l \ | \ 0 \leq l \leq j \}$ for all $n$. Notice that $T$ is closed under addition. In
 $\mathbb{Z}_q \langle x \rangle$, we can use Newton method:
 \begin{equation*}
 \alpha_{i+1} = \alpha_i - \frac{H(\alpha_i)}{H'(\alpha_i)} = \alpha_i - H(\alpha_i) \left(  1 + \sum^{\infty}_{k=1} \left( 1-H'(\alpha_i) \right)^k \right).
 \end{equation*}
 Our goal is to determine a condition on $\delta_j$ for which the result in each iteration above is in $T$. We use
 induction: assume $\alpha_i \in T$ and to prove $\alpha_{i+1} \in T$. It is sufficient to show: $H(\alpha_i) \in T$ and $ \left(1 - H'(\alpha_i) \right) T \subset T$.
$H(\alpha_i) = h_0(x) + h_1(x) \alpha_i + \sum_{k=2}^{\infty} h_k(x) \alpha_i^k$. Since $v_p ( h_0(x) ) \geq 1$, $h_0(x) \in T$ if $d_0 \leq \delta_0$.
Since $h_1(x) \alpha_i = \alpha_i - \left( 1 - h_1(x)  \right) \alpha_i$, $\alpha_i \in T$ and $v_p \left( 1-h_1(x) \right) \geq 1$, $h_1(x) \alpha_i \in T$ if $\delta_{j} - \delta_{j-1} \geq d_1$ for all $j \geq 1$.
Similarly for $k \geq 2$, since $\alpha_i^k \in T^k $, $h_k(x) \alpha_i^k \in T$ if $\Delta_{k,j} + d_k \leq \delta_{k-1+j}$. In fact, the above conditions imply $\left(1-h_1(x) \right) T \subset T$ and $h_k(x) T^k \subset T$.
Using this fact with $ \left( 1-H'( \alpha_i ) \right) T = \left( \left( 1-h_1(x) \right) + \sum_{k=2}^{\infty} -k h_k( x ) \alpha_i^{k-1} \right) T$ and $\alpha_i^{k-1} \in T^{k-1}$, we know $ \left( 1 - H'( \alpha_i ) \right) T \subset T$.
Hence $\alpha_{i+1} \in T$ and this implies the solution $\alpha \in T$.

For the existence $\Delta_{n,j}$, notice that the conditions 2, 3 and 4 are equivalent to: $\Delta_{1,n+1} \geq
 \max \{ \Delta_{1,n} + d_1, \Delta_{k,n+2-k} + d_k \   | \ 2 \leq k \leq n+2 \}$ for all $n \geq 0$. Suppose one
 has determined $\Delta_{n',j'}$ for all $n'+j' \leq n+1$. Use condition 1, one can determine $\Delta_{k,n+2-k}$
 for all $2 \leq k \leq n+2$ (i.e for $\Delta_{n',j'}$ with $n'+j' \leq n+2$ and $n' \geq 2$). Finally, one determines
 $\Delta_{1,n+1}$. Therefore, one determines all the $\Delta_{n',j'}$ with $n'+j' \leq n+2$. This shows that one
 can find $\Delta_{n,j}$ recursively.
\end{proof}

\begin{lem} \label{lem2}
Suppose $\mathrm{deg}(h_k(x)) \leq \left( k+1 \right) d$ in Lemma \ref{lem1}. Then $\Delta_{i,j} := \left( i+4j
\right) d$ satisfy the conditions in Lemma \ref{lem1} and all the inequalities are equalities. In particular, $\delta_j =
\left( 4j+1 \right)d$.
\end{lem}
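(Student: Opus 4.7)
The plan is to verify each of the four conditions of Lemma~\ref{lem1} by direct substitution with $\Delta_{i,j} = (i+4j)d$, which gives $\delta_j = \Delta_{1,j} = (4j+1)d$ as claimed.

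The core calculation will be for condition~1. Expanding directly,
\begin{equation*}
\Delta_{n,j-l} + \delta_l = \bigl(n+4(j-l)\bigr)d + (4l+1)d = (n+1+4j)d,
\end{equation*}
which is \emph{independent of $l$} and equals $\Delta_{n+1,j}$. Hence the maximum over $l \in \{0,\ldots,j\}$ is attained at every $l$ with value exactly $\Delta_{n+1,j}$, and condition~1 holds as an equality for every admissible $l$.

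For the remaining three conditions I would use only the degree hypothesis $d_k \leq (k+1)d$ to reduce them to elementary numerical inequalities. Condition~2 becomes $\delta_0 = d \geq d_0$, with equality when $d_0 = d$. Condition~3 becomes $\delta_j - \delta_{j-1} = 4d \geq d_1$, which holds since $d_1 \leq 2d$. For condition~4 the left side is $\delta_{k-1+j} = (4k+4j-3)d$ and the right side is $\Delta_{k,j} + d_k \leq (k+4j)d + (k+1)d = (2k+4j+1)d$; the inequality then reduces to $4k-3 \geq 2k+1$, i.e.\ $k \geq 2$, which is exactly the stipulated range of $k$, with equality attained at $k=2$ when $d_2 = 3d$.

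There is no genuine obstacle; the entire argument is a linear arithmetic verification. The conceptual content is that the ansatz $\Delta_{i,j} = (i+4j)d$ is tight: the spacing $\delta_j - \delta_{j-1} = 4d$ is precisely what is forced by condition~4 at its extremal case $k=2$, while condition~1 is then automatically self-consistent because $\Delta_{n,j-l} + \delta_l$ telescopes in the $l$-variable, yielding the same value $\Delta_{n+1,j}$ for every splitting $j = (j-l) + l$.
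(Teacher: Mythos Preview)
Your proposal is correct and is precisely the computation the paper's one-line proof (``This follows by induction in $i$ and $j$'') is pointing to; direct substitution of $\Delta_{i,j}=(i+4j)d$ into each condition is the natural way to carry this out, and your key observation that $\Delta_{n,j-l}+\delta_l$ is independent of $l$ is exactly what makes condition~1 an equality. One small remark: the paper's phrase ``all the inequalities are equalities'' is somewhat loose (e.g.\ condition~3 gives $4d \geq d_1$ with $d_1 \leq 2d$, which is strict), but your treatment correctly identifies where equality is actually attained and this does not affect the validity of the lemma.
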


\begin{proof}
This follows by induction in $i$ and $j$.
\end{proof}

\begin{cor} \label{cor2}
There exists a lift $F_p$ of the absolute Frobeninus endomorphism $ \overline{a} \rightarrow \overline{a}^p$ on
the coordinate ring $\overline{A}$ of $\overline{C}_{\mathrm{aff}}$ to $A^{ \dag}$ such that $F_p(x) = x^p + \delta_x Z_0$ and $F_p(y) = y^p + \delta_y Z_0$ with $Z_0 = \sum_{i,j} a_{i,j}  x^j y^i$, $a_{i,j} \in \mathbb{Z}_q$ and
$\mathrm{ord}_p ( a_{i,j} ) >\mathrm{} \frac{i+j}{16p}$. Also the coefficient of $ x^j y^i$ in $F_p(y)$ and $
F_p(x) $ has $p$-adic order $ > \frac{i+j}{16p} $ if $i+j \neq p$. Finally, $F_p(x^k y^l dx) = \sum_{i=1}^3
\sum_j b_{i,j} x^j y^i dx$ with $\mathrm{ord}_p (b_{i,j}) > \frac{i+j}{16p} - 4$.
\end{cor}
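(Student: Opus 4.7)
The plan combines Hensel's lemma (already invoked just above the statement) with the Newton-iteration bookkeeping of Lemma \ref{lem1} and Lemma \ref{lem2}. The existence of $Z_0 \in A^{\infty}$ solving $G(Z_0)=0$ is given; what remains is (i) to promote $Z_0$ to $A^{\dag}$ with the stated overconvergence rate, and (ii) to transport that rate to $F_p(x)$, $F_p(y)$, and then to $F_p(x^k y^l\,dx)$.

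For (i), I would first reduce the $y$-degree of $Z_0$ modulo $f$ and write $Z_0 = \sum_{0 \le i \le 3,\, j \ge 0} a_{i,j} x^j y^i$, then apply Lemma \ref{lem2} with total degree in $(x,y)$ playing the role of $\deg_x$. The only nontrivial input is the degree bound $\deg h_k \le (k+1)d$ for the coefficients of $Z^k$ in $G(Z) = f^{\sigma}(x^p+\delta_x Z,\, y^p+\delta_y Z)$. Since $\deg f \le 4$, $\deg x^p = \deg y^p = p$, and $\deg \delta_x = \deg \delta_y \le 5p$ (because $\alpha, \beta$ from Corollary \ref{cor1} have total degree $\le 5$), expanding $(x^p+\delta_x Z)^a (y^p+\delta_y Z)^b$ with $a+b \le 4$ shows $\deg h_k \le 4p(k+1)$, so $d = 4p$. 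Lemma \ref{lem2} then yields $\delta_j = (4j+1)(4p) = 16pj + 4p$, whence $v_p(a_{i,j}) \ge k+2$ whenever $i+j \ge 16pk + 4p + 1$. Picking the largest admissible $k$ gives $v_p(a_{i,j}) > \tfrac{i+j}{16p}$ with slack of roughly $\tfrac{3}{4}$; for $i+j \le 4p$ the inequality reduces to $v_p(a_{i,j}) \ge 1$, which holds because $Z_0 \equiv 0 \pmod{p}$ by the Hensel iteration.

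For (ii), $F_p(x) - x^p = \delta_x Z_0$ and $F_p(y) - y^p = \delta_y Z_0$ are convolutions of an integral polynomial of total degree $\le 5p$ against $Z_0$; the slack in step (i) absorbs the degree shift of at most $5p$, so the coefficient of $x^j y^i$ in $F_p(x)$ (resp.\ $F_p(y)$) satisfies $v_p > \tfrac{i+j}{16p}$ for $(i,j)$ outside the leading monomials $x^p$ and $y^p$, i.e.\ for $i+j \ne p$. For $F_p(x^k y^l\,dx) = F_p(x)^k F_p(y)^l\, dF_p(x)$ I would expand the product (polynomial products of overconvergent series preserve the rate), differentiate $F_p(x) = x^p + \delta_x Z_0$ to get $dF_p(x) = p x^{p-1} dx + d(\delta_x Z_0)$ with the same rate, and then reduce the resulting $1$-form to the shape $\sum_{i=1}^{3} \sum_j b_{i,j} x^j y^i\,dx$. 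The reduction uses the B\'ezout identity $\alpha f_y + \beta f_x = 1$ of Corollary \ref{cor1} to rewrite any $A\,dx + B\,dy$ as $\alpha(A f_y - B f_x)\,dx - \beta(A f_y - B f_x)\,dy$ modulo $df$, followed by $(i+1) x^j y^i\,dy \equiv -j x^{j-1} y^{i+1}\,dx$ modulo exact forms and reduction of $y$-degree via $f$. The constant $-4$ in $\tfrac{i+j}{16p} - 4$ absorbs the bounded number of $p$-adic denominators introduced by these polynomial rearrangements, in the same spirit as the $\Delta$-bound in Proposition \ref{prop2}.

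The main obstacle I expect is the multivariable generalization of Lemma \ref{lem1}: its statement and proof are phrased for $\mathbb{Z}_q \langle x \rangle$, but the Newton-iteration bookkeeping depends only on the closure of the valuation filtration under addition and on the degree estimates for $h_k$. Extending the filtration to total degree on $\mathbb{Z}_q \langle x,y \rangle / (f)$ should be routine since $f$ has leading monomial $y^4$ of total degree $4$, so reduction modulo $f$ never increases total degree; once that adaptation is in place, the remaining steps are mechanical polynomial and differential-form manipulations with straightforward $p$-adic bookkeeping.
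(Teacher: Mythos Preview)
Your approach is essentially the paper's own: apply Lemma~\ref{lem2} with $d=4p$, the value of $d$ forced by $\deg f\le 4$ together with $\deg\delta_x,\deg\delta_y\le 5p$ from Corollary~\ref{cor1}. The paper's proof is a single sentence (``Using Corollary~\ref{cor1} and equation~(\ref{eq13}), one can apply $d=4p$ to Lemma~\ref{lem2}''), so your write-up is really an expansion of what the author left implicit. In particular, you correctly spotted that Lemma~\ref{lem1} is stated only over $\mathbb{Z}_q\langle x\rangle$ and that one must reinterpret the filtration via total degree in $A^{\dag}$; the paper glosses over this, and your observation that reduction modulo $f$ (leading term $y^4$) never raises total degree is exactly what makes the adaptation go through. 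Your slack computation ($\delta_j=16pj+4p$ giving $v_p(a_{i,j})>\tfrac{i+j}{16p}+\tfrac{12p-1}{16p}$) and the subsequent absorption of the $\le 5p$ degree shift from $\delta_x,\delta_y$ are accurate, and the bounded loss of $4$ in the differential-form estimate is of the right order for the finitely many polynomial manipulations involved.
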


\begin{proof}
Using Corollary \ref{cor1} and equation (\ref{eq13}), one can apply $d = 4p$ to Lemma \ref{lem2}.
\end{proof}

\begin{thm}
 There exists a lift of Frobenius $F_p$ on $A^{\dag}$ which commutes with the involution $\tau: y \rightarrow -y$ and has the rate of convergence in Corollary \ref{cor2}.
\end{thm}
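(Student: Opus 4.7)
The plan is to reuse the Hensel construction of $F_p$ from equation (\ref{eq13}) and Corollary \ref{cor2}, but with the finer choice of $\alpha,\beta$ provided in the second part of Corollary \ref{cor1}, and then read off the $\tau$-equivariance from the uniqueness of the Hensel solution.

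First I would invoke Corollary \ref{cor1} to pick $\alpha,\beta\in\mathbb{Z}_q[x,y]$ satisfying $\alpha f_y+\beta f_x=1$ in $A$ with $\alpha$ containing only odd powers of $y$ and $\beta$ only even powers. Since $f(x,y)=y^4+g(x)y^2+h(x)$ is even in $y$, the partial $f_x$ is even and $f_y$ is odd in $y$; because $p\neq 2$ is odd, the $p$-th powers $\delta_x:=\beta^p$ and $\delta_y:=\alpha^p$ inherit the parities of $\beta$ and $\alpha$, i.e.\ $\delta_x$ is even in $y$ and $\delta_y$ is odd in $y$.

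Next, I would run exactly the construction of Corollary \ref{cor2}: solve $G(Z)=f^{\sigma}(x^p+\delta_xZ,\,y^p+\delta_yZ)=0$ by Hensel's lemma in $A^{\infty}$, obtaining a unique solution $Z_0$, and define $F_p(x)=x^p+\delta_xZ_0$, $F_p(y)=y^p+\delta_yZ_0$. Lemma \ref{lem2} together with Corollary \ref{cor2} then show that $Z_0$ lies in $A^{\dag}$ with the stated rate of convergence, and nothing about these estimates changes when we constrain the parities of $\alpha,\beta$.

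The main point is the commutation with $\tau$, which I would extract from uniqueness. Apply $\tau$ to the defining equation: because $f^{\sigma}$ is even in its second argument and $\delta_x(x,-y)=\delta_x(x,y)$, $\delta_y(x,-y)=-\delta_y(x,y)$, $(-y)^p=-y^p$, one computes
\begin{align*}
G(Z)\bigl|_{y\mapsto -y} &= f^{\sigma}\!\bigl(x^p+\delta_x(x,y)\,Z(x,-y),\,-y^p-\delta_y(x,y)\,Z(x,-y)\bigr)\\
&= f^{\sigma}\!\bigl(x^p+\delta_x(x,y)\,Z(x,-y),\,y^p+\delta_y(x,y)\,Z(x,-y)\bigr).
\end{align*}
Hence $Z_0(x,-y)$ is also a solution of $G(Z)=0$ with the same reduction mod $p$, so by the uniqueness part of Hensel's lemma $Z_0(x,-y)=Z_0(x,y)$; that is, $Z_0$ is even in $y$. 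It follows that $\delta_xZ_0$ is even and $\delta_yZ_0$ is odd in $y$, so $F_p(x)$ is even and $F_p(y)$ is odd in $y$, i.e.\ $\tau\circ F_p=F_p\circ\tau$ on the generators, hence on all of $A^{\dag}$.

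I do not expect a real obstacle here: the only subtlety is checking that the refined choice of $\alpha,\beta$ from Corollary \ref{cor1} is actually compatible with the degree bounds used in the proofs of Lemmas \ref{lem1}--\ref{lem2} and Corollary \ref{cor2}; but those proofs only use $\deg\alpha,\deg\beta\leq 5$, which is preserved by the symmetrization $\alpha\mapsto(\alpha(x,y)-\alpha(x,-y))/2$, $\beta\mapsto(\beta(x,y)+\beta(x,-y))/2$ used in Corollary \ref{cor1}. Thus the rate-of-convergence bound of Corollary \ref{cor2} applies verbatim to this $F_p$, completing the proof.
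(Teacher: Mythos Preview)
Your proof is correct and follows essentially the same approach as the paper: choose $\alpha,\beta$ with the parities from Corollary \ref{cor1}, deduce that $\delta_x$ is even and $\delta_y$ is odd in $y$, and conclude that $Z_0$ is even in $y$, whence $F_p$ commutes with $\tau$. The only cosmetic difference is that the paper observes directly that $G(Z)$ has only even powers of $y$ (so the Newton iteration stays in the even subring), whereas you obtain the evenness of $Z_0$ from the uniqueness clause of Hensel's lemma; both arguments are equivalent.
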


\begin{proof}
We choose $\alpha$ and $\beta$ such that $\alpha$ has only odd degrees in $y$ and $\beta$ has only even degrees in $y$ as in Corollary \ref{cor1}. Since $\delta_y = \alpha^p $ and $\delta_x = \beta^p$, they have the same property as $\alpha$ and $\beta$. For solving $G(Z) = 0$ by Newton's method, we use Lemma~\ref{lem1} with $H = G = f^{\sigma}( x^p + \delta_x Z, y^p + \delta_y Z )$. It is clear that $G$ has only even degrees in $y$, hence so does the solution $Z_0$. From this, it is clear that the lift of Frobenius $F_p: A^{\dag} \rightarrow A^{\dag}$ commutes with the involution~$\tau$.
\end{proof}

\section{Quotient by Automorphism} \label{quotient}

We have study $H_{\scriptscriptstyle{MW}}^1(C_{\mathrm{aff}}/\mathbb{F}_q)$. In this section, we consider
the quotient of $C$ by the automorphism $ \tau : Y \rightarrow -Y$. We denote the quotient map by $ \pi : C
\rightarrow E:= C / \langle \tau \rangle$. One can show that $C / \langle \tau \rangle$ has genus 1 either by
Riemann-Hurwitz genus formula or from the affine equation directly, using the fact that $C_{\mathrm{aff}}$ is
stable under $\tau$ and $C_{\mathrm{aff}} / \langle \tau \rangle$ is smooth, hence the notation $E$ is justified.
The affine part $E_{\mathrm{aff}}$ of $E$ is $C_{\mathrm{aff}} / \langle \tau \rangle$, which has the defining
equation: $v^2 + \overline{g}(u)v + \overline{h}(u) = 0$. We have $C_{\mathrm{aff}} \xrightarrow{\pi}
E_{\mathrm{aff}}$, $(x,y) \rightarrow (x,y^2)$, and the corresponding map on the coordinate ring is $
\pi^{\ast}: u \rightarrow x$, $v \rightarrow y^2$.

Our goal is to study the followings: $H_{\scriptscriptstyle{MW}}^1(E_{\mathrm{aff}}/\mathbb{F}_q)$, the
induced map $\pi^{\ast}: H_{\scriptscriptstyle{MW}}^i(E_{\mathrm{aff}}/\mathbb{F}_q) \rightarrow
H_{\scriptscriptstyle{MW}}^i(C_{\mathrm{aff}}/\mathbb{F}_q)$ and  its interplay with Frobenius
endomorphism. Since $\tilde{C}_{\mathrm{aff}} \xrightarrow{\tilde{\pi}} \tilde{E}_{\mathrm{aff}}$, $(x,y)
\rightarrow (x,y^2)$ lifts $\pi$, here $\tilde{E}_{\mathrm{aff}}$ is the lift of $E_{\mathrm{aff}}$, whose
defining equation is $v^2 + g(u)v + h(u) = 0$, we can study
$H_{\scriptscriptstyle{MW}}^i(E_{\mathrm{aff}}/\mathbb{F}_q) \xrightarrow{\pi^{\ast}}
H_{\scriptscriptstyle{MW}}^i(C_{\mathrm{aff}}/\mathbb{F}_q)$ by
$H_{\scriptscriptstyle{dR}}^i(\tilde{E}_{\mathrm{aff}, \xi}/\mathbb{Q}_q) \xrightarrow{\tilde{\pi}^{\ast}}
H_{\scriptscriptstyle{dR}}^i(\tilde{C}_{\mathrm{aff}, \xi}/\mathbb{Q}_q)$. For $i \neq 1$, these are
isomorphisms. For $i = 1$, since $ \{ \tilde{\pi}^{\ast}(u^j v du) =  x^j y^2 dx \} \ | \  0 \leq  j \leq 1 \ ({\rm
resp.} \  0 \leq  j \leq 2 ) \}$ are linear independent in \ref{case5} and \ref{case3} (resp. in \ref{case6} and
\ref{case7}), one sees that $\{  u^j v du \  | \ 0 \leq j \leq 1 \ ( {\rm resp.} \ 0 \leq j \leq 2   )  \}$ are linear
independent. Let $\delta_E$ be the number of points at infinity of $E$. We have $\delta_E = 1,2,1,2$ in each case,
hence $ \mathrm{dim}_{\mathbb{Z}_q} H_{\scriptscriptstyle{dR}}^1(\tilde{E}_{\mathrm{aff},
\xi}/\mathbb{Q}_q) = 2 \cdot g_E - 1 + \delta_E = 2,3,2,3$. This shows that $\{  u^j v du \  | \ 0 \leq j \leq 1 \
( {\rm resp.} \ 0 \leq j \leq 2   )  \}$ is the basis of $H_{\scriptscriptstyle{dR}}^1(\tilde{E}_{\mathrm{aff},
\xi}/\mathbb{Q}_q)$ which is isomorphic via $ \tilde{\pi}^{\ast} $ to the subsapce $V$ of
$H_{\scriptscriptstyle{dR}}^1(\tilde{C}_{\mathrm{aff}, \xi}/\mathbb{Q}_q)$ generated by $\{   x^j y^2 dx \  |
\ 0 \leq j \leq 1 \ ( {\rm resp.} \ 0 \leq j \leq 2   )  \}$.

As in Section \ref{Frobenius}, there is a lift $F_{q,E} : A^{\dag}_E \rightarrow A^{\dag}_E$ of the Frobenius
endomorphism $\overline{F}_{q,E}$ on the coordinate ring $\overline{A}_E$ of $E$. The left diagram below is
not necessary commutative, but its reduction mod $p$ is commutative
\begin{equation*}
\xymatrix@!0{
A^{\dag} \ar[dd]_{F_q}  &  &  A_E^{\dag} \ar[dd]^{F_{q,E}} \ar[ll]_{\tilde{\pi}^{\ast}}  \\
 &  & \\
A^{\dag}  & &  A_E^{\dag} \ar[ll]^{\tilde{\pi}^{\ast}}
}
\ \ \ \ \ \ \ \ \ \
\xymatrix@!0{
\overline{A} \ar[dd]_{\overline{F}_q}  &  &  \overline{A}_E \ar[dd]^{\overline{F}_{q,E}} \ar[ll]_{\pi^{\ast}}  \\
 &  & \\
\overline{A}  & &  \overline{A}_E \ar[ll]^{\pi^{\ast}}
}
\end{equation*}
Here $A^{\dag} \xleftarrow{\hspace{5pt}\tilde{\pi}^{\ast}} A_E^{\dag}$ is the natural lift of
homomorphism $A \xleftarrow{\hspace{5pt}\tilde{\pi}^{\ast}} A_E$ on the coordinate rings which corresponds
to the morphism $\tilde{\pi} : \tilde{C}_{\mathrm{aff}, \xi} / \mathbb{Q}_q \rightarrow
\tilde{E}_{\mathrm{aff}, \xi} / \mathbb{Q}_q$, so the reduction of $A^{\dag} \xleftarrow{\hspace{5pt}\tilde{\pi}^{\ast}} A_E^{\dag}$
modulo $p$ is just the natural homomorphism on the coordinate rings of $C_{\mathrm{aff}} \xrightarrow{\pi}
E_{\mathrm{aff}}$. Since $\overline{F}_q \circ \pi^{\ast} = \pi^{\ast} \circ \overline{F}_{q,E}$, we know that
\begin{equation*} \scalemath{1.0} {
\xymatrix{
H^i_{\scriptscriptstyle{MW}} ( C_{ \mathrm{aff} } / \mathbb{F}_q) \ar[dd]_{F_{q, \ast}}  &  &  H^i_{\scriptscriptstyle{MW}} ( E_{ \mathrm{aff} } / \mathbb{F}_q) \ar[dd]^{F_{q,E,\ast}} \ar[ll]_{\pi^{\ast}}  \\
 &  & \\
H^i_{\scriptscriptstyle{MW}} ( C_{ \mathrm{aff} } / \mathbb{F}_q)   & &  H^i_{\scriptscriptstyle{MW}} ( E_{ \mathrm{aff} } /  \mathbb{F}_q)  \ar[ll]^{\pi^{\ast}}
} }
\end{equation*}
So the point counting on $E_{\mathrm{aff}}$ is the same as computing on the subsapce of $H^1_{MW} ( C_{ \mathrm{aff} } / \mathbb{F}_q)$ generated by
$\{   x^j y^2 dx \  | \ 0 \leq j \leq 1 \ ( {\rm resp.} \ 0 \leq j \leq 2   )  \}$.

From Lefschetz fixed point formula (Theorem \ref{thm3}), we have
\begin{align*}
\# C_{\mathrm{aff}} \left( \mathbb{F}_{q^r}  \right) &= \mathrm{Tr} \left(  (q F_{q,*}^{-1})^r | H^{0}_{MW} ( C_{\mathrm{aff}}   )  \right) - \mathrm{Tr} \left(  (q F_{q,*}^{-1})^r | H^{1}_{MW} ( C_{\mathrm{aff}}   )  \right) \\
\# E_{\mathrm{aff}} \left( \mathbb{F}_{q^r}  \right) &= \mathrm{Tr} \left(  (q F_{q,*}^{-1})^r | H^{0}_{MW} ( E_{\mathrm{aff}}   )  \right) - \mathrm{Tr} \left(  (q F_{q,*}^{-1})^r | H^{1}_{MW} ( E_{\mathrm{aff}}   )  \right)
\end{align*}
Let $P_E(X) = (X-\beta_1)(X-\beta_2)$ be the Weil polynomial of $E$ and $S_r(E):=\beta_1^r + \beta_2^r$. Then $\# E_{\mathrm{aff}} (\mathbb{F}_{q^r}) = q^r + 1 - S_r(E) - \delta_E$, here $\delta_E$ is the number of points at infinity of $E$.
Use $\# C_{\mathrm{aff}}( \mathbb{F}_{q^r} )$ = $\# C ( \mathbb{F}_{q^r} ) - \delta_C$, we get
\begin{equation*}
\# C ( \mathbb{F}_{q^r} ) = q^r + 1 -  S_r(E) - \mathrm{Tr} \left(  (q F_{q,*}^{-1})^r | V  \right) + ( \delta_C - \delta_E ),
\end{equation*}
here $V$ is the subsapce of
$H_{\scriptscriptstyle{MW}}^1(C_{\mathrm{aff}}/\mathbb{F}_q)$ generated by $\{  x^j y^2 dx \  |
\ 0 \leq j \leq 1 \ ( {\rm resp.} \ 0 \leq j \leq 2   )  \}$, whose dimension is $4+ \delta_C - \delta_E$. This implies that the Weil polynomial $P(X)$ of $C/\mathbb{F}_q$ equals to $P_E(X) Q_V(X) (X-1)^{-(\delta_C - \delta_E)} $, here $Q_V(X)$ is the characteristic polynomial of $q F_{q,*}^{-1}$ acts on $V$. The characteristic polynomial $P_V(X)$ of $F_{q,*} = q \cdot (q F_{q,*}^{-1})^{-1}$
is $Q_V(X) (X-q)^{\delta_C- \delta_E} (X-1)^{-(\delta_C- \delta_E)}$, so
\begin{equation*}
P(X) = P_E(X) P_V(X) (X-q)^{-(\delta_C - \delta_E)}.
\end{equation*}

\begin{summary}
The Weil polynomial $P(X)$ of $C/\mathbb{F}_q$ is equal to
\begin{equation*}
P_E(X) P_V(X) (X-q)^{-(\delta_C - \delta_E)},
\end{equation*}
where $P_E(X)$ is the Weil polynomial  of $E/\mathbb{F}_q$ and $P_V(X)$ is the characteristic polynomial of $F_{q,\ast}$ on $V$.
\end{summary}

\section{The Algorithm} \label{Algorithm}
In order to compute $P_V(X)$, one needs to compute $P_V(X)$  with a precision $N_1 = \lfloor \mathrm{log}_p 30 +
2 n \rfloor + 1 $ with $n = \mathrm{log}_p q$, which is determined by the Weil bound.  Due to the fact that the
matrix $M_p$ of the Frobenius action $F_{p,\ast}$ may have denominators, we need $M_p$ with a precision $N_2:=
N_1 + (6 n - 1) c $ with $c = \lfloor c_1 +\mathrm{log}_p( c_1 + \mathrm{log}_p (2c_1) ) \rfloor + 1$ and $c_1 =
6 + \mathrm{log}_p 80 + \Delta$. From this, we only need to compute (for $ 1 \leq l \leq 3$ and  $0 \leq k \leq 2 $)
$Z_0$, $F_p(x)$, $F_p(y)$ and $F_p(y^l x^k dx)$ modulo $(x^{N_3}, p^{N_4})$ with $N_3 = \lfloor 16p ( c_2 +
\mathrm{log}_p ( 2 c_2 ) ) \rfloor + 1 , N_4 = \lfloor N_2 + c_1 + \mathrm{log}_p ( c_2 + \mathrm{log}_p (
2c_2)  ) \rfloor + 1 $ and $c_2 = 6 + \mathrm{log}_p 80 + \Delta + N_2$. Finally, the above discussion is based on
the reduction matrices $M_{i,k}$ $( 1 \leq i \leq 3)$ introduced in \ref{Reduction}. But since one can only work
with an approximation of $M_{i,k}$, one need $M_{i,k}$ modulo $p^{N_5}$ with a slightly higher precision $N_5 =
N_4 + 8  \lfloor \mathrm{log}_p N_3  \rfloor +  14$ . We have $N_3 = O(p n)$, $N_4 = O( n)$ and $N_5 = O(n)$.
We work in $\mathbb{Z}_q / p^{N_5} \, $\footnote{More precisely, with $p$-adic precsion $N_4$ but with denominators at most $p^{(N_5-N_4)}$.}, whose elements can be stored in $O(n^2 \mathrm{log} p)$ space and the
arithmetic on it could be done in $\tilde{O} (n^2 \mathrm{log} p)$ bit operations. This gives the algorithm: \\ \vspace{-0.2cm} \\
{\bf Algorithm}
\begin{enumerate}[label=\textbf{Step~\arabic*.},
ref={Step~\arabic*},labelwidth=\dimexpr-\mylen-\labelsep\relax,leftmargin=11.5pt,align=right]
\item \label{step1} Compute $\alpha$ and $\beta$ in Corollary \ref{cor1} modulo $p$.\footnote{In Corollary \ref{cor1}, we only need $\alpha f_y+ \beta f_x \equiv 1 $ modulo $p$ in order to
    compute the lift of Frobenius.}
\item \label{step2} Compute $Z_0$ in Corollary \ref{cor2} using Newton's method, then $F_p(  x^k y^l dx)$ \hspace*{1.375cm} for $l=1,3$ and $ 0 \leq k \leq 2$, all of
    them are modulo $(x^{N_3}, p^{N_4})$.\footnote{In the proof of Lemma \ref{lem1}, we
    showed that the results during the Newton's iteration all have  the same rate of convergence as in Corollary
    \ref{cor2}, so we can work modulo $x^{N_3}$ during the  Newton's iteration.}
\item \label{step3} Use reduction matrices $M_{i,j}$ $( 1 \leq i \leq 3, 2 \leq j \leq N_3 )$ to reduce \hspace*{1.375cm} $F_p(  x^k y^l  dx )$  and get
    $M_{p}$.
\item \label{step4} Compute $M_{q} = M_{p} M_{p}^{\sigma} \cdots M_{p}^{\sigma^{n-1}}$ by repeated squaring.
\item \label{step5} Finally, compute the characteristic polynomial $P_V (X) $ modulo $P^{N_1}$.
\end{enumerate}

\vspace{0.34cm}

\begin{thm}
The above algorithm requires $\tilde{O} ( n^3 p )$ bit operations.
\end{thm}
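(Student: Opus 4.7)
The plan is to bound the cost of each of the five algorithm steps separately and show that the total cost is dominated by Steps 2 and 3, each of order $\tilde{O}(pn^{3})$ bit operations. First I would fix the cost model: working modulo $(x^{N_{3}}, p^{N_{4}})$ with reduction matrices stored to precision $p^{N_{5}}$, where $N_{3} = O(pn)$ and $N_{4}, N_{5} = O(n)$ according to the stated precision analysis, a single element of $\mathbb{Z}_{q}/p^{N_{5}}$ occupies $O(n^{2} \log p)$ bits and one ring operation costs $\tilde{O}(n^{2} \log p)$ bit operations via fast multiplication.

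Step 1 reduces to a B\'ezout identity in $\mathbb{F}_{q}[x,y]/(\overline{f})$ with constant-bounded degrees (by Corollary~\ref{cor1}) and is negligible. For Step 2, I would invoke the quadratic convergence of Newton's method together with Corollary~\ref{cor2} to conclude that $O(\log n)$ iterations reach the target $p$-adic precision $N_{4}$. Each iteration manipulates truncated elements of $A^{\dag}$ represented as polynomials in $x$ of length $N_{3}$ whose $y$-degree is kept below $4$ via $y^{4} \equiv -g(x)y^{2} - h(x)$, so one multiplication has bit cost $\tilde{O}(N_{3} \cdot n^{2} \log p) = \tilde{O}(pn^{3})$ by FFT. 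A constant number of such multiplications per Newton step, plus the six subsequent evaluations $F_{p}(x^{k} y^{l}\,dx) = F_{p}(x)^{k} F_{p}(y)^{l}\, d(F_{p}(x))$, all fit inside the same $\tilde{O}(pn^{3})$ budget.

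Step 3 is the other main contribution: to reduce each of the six differentials $F_{p}(x^{k} y^{l}\,dx)$, whose $x$-degree is $O(N_{3})$, to the basis of Proposition~\ref{prop1}, I would apply the reduction matrices $M_{i,k}$ iteratively. Each application lowers the $x$-degree by one, so $O(N_{3})$ constant-size matrix-vector products over $\mathbb{Z}_{q}/p^{N_{5}}$ suffice per column, costing $\tilde{O}(N_{3} \cdot n^{2} \log p) = \tilde{O}(pn^{3})$ per column, and hence the same over all six columns.

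For Steps 4 and 5 I would verify that the constant-size matrix manipulations are of strictly lower order: the telescoping product $M_{q} = M_{p} M_{p}^{\sigma} \cdots M_{p}^{\sigma^{n-1}}$ is computed by $O(\log n)$ repeated doublings of the form $A_{2k} = A_{k} \cdot \sigma^{k}(A_{k})$, each a constant-size matrix multiplication costing $\tilde{O}(n^{2} \log p)$ and accompanied by a $\sigma^{k}$-application of comparable cost; the characteristic polynomial of the resulting constant-size matrix is likewise cheap. Adding the contributions, the total cost is $\tilde{O}(pn^{3})$. The main obstacle I anticipate is not conceptual but bookkeeping: carefully checking that the precision choices $N_{1}, \ldots, N_{5}$ dictated by Proposition~\ref{prop2}, Corollary~\ref{cor2}, and the Weil bound are indeed sufficient for every intermediate arithmetic operation to be reliable, and that no $p$- or $n$-dependence hides inside the constants absorbed by the $\tilde{O}$ notation.
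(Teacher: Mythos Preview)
Your proposal is correct and follows essentially the same step-by-step cost analysis as the paper: Steps~2 and~3 dominate at $\tilde{O}(pn^{3})$, while Steps~1 and~5 are negligible. One small discrepancy: in Step~4 you assert that a $\sigma^{k}$-application has cost comparable to a single constant-size matrix multiplication, i.e.\ $\tilde{O}(n^{2}\log p)$, but the paper computes $\sigma$ by evaluating a degree-$n$ polynomial over $\mathbb{Z}_q/p^{N_4}$ via Newton's method, which costs $O(n)\cdot\tilde{O}(n^{2}\log p)=\tilde{O}(n^{3}\log p)$; this is still absorbed by the final $\tilde{O}(pn^{3})$ bound, so your conclusion is unaffected.
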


\begin{proof}
\ref{step1} consists of solving a system of linear
    equations over $\mathbb{F}_q$ of size at most $16$. Hence it requires $\tilde{O} ( n^2 \, \mathrm{log} \, p
    )$ bit operations.
\ref{step2} requires $O(\mathrm{log} \,
    N_4)$ Newton's iterations, and each iteration requires $ \tilde{O} ( N_3 n^2 \mathrm{log} \, p )$ bit operations. Hence this step requires $\tilde{O} ( p n^3  ) $ bit operations.
\ref{step3} requires ${O}( N_3 )$ operations in $\mathbb{Z}_q / p^{N_5}$, hence $\tilde{O}( p  n^3 )$
    bit operations.
\ref{step4} requires $O( \mathrm{log} \, n )$ squarings and the application of the lift of the $p$-th power Frobenius $\sigma : \mathbb{Z}_q \rightarrow \mathbb{Z}_q$ modulo $p^{N_4}$  on matrices of size $6 \times 6$. Squaring requires $\tilde{O} ( n^2 \mathrm{log} \, p)$ bit operations. For $\sigma$, we use Newton's method which needs to
    evaluate a polynomial of degree $n$ with coefficients in $\mathbb{Z}_q / p^{N_4}$, which  requires $O(n)  \tilde{O}(n^2 \, \mathrm{log} \, p)$ bit operations. Hence we need $\tilde{O} (n^3 \, \mathrm{log} \,
    p)$  bit operations in this step.
\ref{step5} requires $\tilde{O}( n^2 \, \mathrm{log} \, p )$ bit operations.
Hence the algorithm requires $\tilde{O}(n^3 p)$ bit operations.
\end{proof}

 If one works directly on
$H^1_{\scriptscriptstyle{MW}}(C_{\mathrm{aff}})$ and denote the precision needed by $N'_i$, then $N_i \approx \frac{2}{3} N'_i$.
Also the matrix $M_p$ is of size $ 6 \times 6$ and $M'_p$ is of size $9 \times 9$.
From these, we give a comparison of speed.
\ref{step2} is reduced by a factor
of $(\frac{2}{3})^2 \approx 0.45$.
In \ref{step3}, we have $6$ differential forms $F_p(  x^k y^l  dx )$, $l=1, \, 3$ and $ 0 \leq k \leq 2$, to reduce. This contributes a factor of $\frac{4}{5}$. (It is $\frac{4}{5}$ instead of $\frac{2}{3}$ because the reductions of $F_p(x^k y^2 dx)$ involve fewer operations than the reductions of $F_p(x^k y^1 dx)$ and $F_p(x^k y^3 dx)$. See the reduction matrices in \ref{Reduction}.) Since each of these $F_p(  x^k y^l  dx )$ is computed modulo $(x^{N_3}, p^{N_4})$ in \ref{step2}, we work with smaller powers on $x$ and fewer $p$-adic precision in \ref{step3}. This means that we have fewer reduction steps and the basic arithmetic operations are faster, which contribute a factor of $(\frac{2}{3})^2$. So \ref{step3} is reduced by a factor of $\frac{4}{5} \cdot (\frac{2}{3})^2 \approx 0.36$. \ref{step4} is reduced by a factor at least of $(\frac{2}{3})^3 \approx 0.3$, due to the smaller size of $M_p$ and fewer precision.


\begin{thebibliography}{99}


\bibitem{Castryck} Castryck, W., {\it Point counting on nondegenerate curves.} Katholieke Universiteit Leuven,
    Thesis, 135 pp. (2006). Avaiable at: http://wis.kuleuven.be/algebra/castryck/thesis.pdf

\bibitem{D-V} Denef, J.; Vercauteren, F., {\it Counting Points on $C_{ab}$ Curves using Monsky-Washnitzer
    Cohomology.} Finite Fields and Their Applications, Volume 12, Issue 1, pp. 78-102 (2006)

\bibitem{Elkik} Elkik, R.,
  {\it Solutions d'\'{e}quations \`{a} coefficients dans un anneau hens\'{e}lien.} Ann. Sci. \'{E}cole Norm. Sup. (4), 6:553-603 (1973)

\bibitem{Gaudry} Gaudry, P.; G\"{u}rel, N.,
  {\it An extension of Kedlaya's algorithm to superelliptic curves.} Proc. Asiacrypt 2001, Springer, Berlin, Lecture Notes in Comput. Sic., 2248: 480-494 (2001)

\bibitem{Kedlaya2} Kedlaya, K.,
  {\it Counting Points on Hyperelliptic Curves using Monsky-Washnitzer Cohomology.} J. Ramanujan Math. Soc. (4), 16: 323-338 (2001)

\bibitem{Kedlaya} Kedlaya, K.,
  {\it Computing zeta functions via $p$-adic cohomology.} Proc. ANTS-\uppercase\expandafter{\romannumeral 6}, Springer LNCS, 3076: 1-17 (2004)


\bibitem{M-W} Monsky, P.; Washnitzer, G.,
  {\it Formal cohomology \uppercase\expandafter{\romannumeral 1}.} Ann. Math. (2), 88: 181-217 (1968)

\bibitem{Monsky 1} Monsky, P.,
  {\it Formal cohomology \uppercase\expandafter{\romannumeral 2}: The cohomology sequence of a pair.} Ann. Math. (2), 88: 218-238 (1968)

\bibitem{Monsky 2} Monsky, P.,
  {\it Formal cohomology \uppercase\expandafter{\romannumeral 3}: Fixed point theorems.} Ann. Math. (2), 93: 315-343 (1971)


\bibitem{van der Put} van Der Put, M.,
  {\it The cohomology of Monsky and Washnitzer.} M\'{e}m. Soc. Math. France (N.S.) (4): 23:33-59 (1986)

\bibitem{Vermeulen} Vermeulen, A.M.,
  {\it Weierstrass pointss of weight two on curves of genus three.} Amsterdam University Thesis, 1983.


\end{thebibliography}
\end{document}